\documentclass[11pt]{article}
\usepackage[utf8]{inputenc}

\usepackage{
amsmath,amssymb,amsfonts,amsthm} 
\usepackage{bm}
\usepackage{colonequals}
\usepackage{comment}
\usepackage{epsfig} \usepackage{latexsym,nicefrac,bbm}
\usepackage
{xspace}
\usepackage{color,fancybox,graphicx,subfigure,fullpage}
\usepackage[top=1in, bottom=1in, left=1in, right=1in]{geometry}
\usepackage{tabularx} 
\usepackage{hyperref} 
\usepackage{cleveref}
\usepackage{pdfsync}
\usepackage[boxruled,linesnumbered]{algorithm2e}
\usepackage{siunitx}

\usepackage{multicol}
\usepackage{enumitem}
\usepackage{float}
\usepackage{tikz}
\usepackage{tabularray}
\usepackage{framed}

\makeatletter
\newcommand{\vast}{\bBigg@{4}}
\newcommand{\Vast}{\bBigg@{5}}
\makeatother

\newtheorem{theorem}{Theorem}[section]

\newtheorem{conjecture}[theorem]{Conjecture}
\newtheorem{definition}[theorem]{Definition}
\newtheorem{lemma}[theorem]{Lemma}
\newtheorem{remark}[theorem]{Remark}
\newtheorem{proposition}[theorem]{Proposition}

\newcommand{\bN}{\mathbb{N}}

\newcommand{\bR}{\mathbb{R}}

\newcommand{\bZ}{\mathbb{Z}}
\DeclareSymbolFont{bbold}{U}{bbold}{m}{n}
\DeclareSymbolFontAlphabet{\mathbbold}{bbold}
\newcommand{\One}{\mathbbold{1}}

\DeclareMathOperator*{\argmax}{arg\,max}

\renewcommand{\epsilon}{\varepsilon}
\renewcommand{\Tilde}{\widetilde}
\renewcommand{\hat}{\widehat}

\newcommand{\Ex}{\mathop{\mathbb{E}}}

\newcommand{\eps}{\varepsilon}
\renewcommand{\epsilon}{\varepsilon}

\def\abs#1{\left|#1  \right|}

\def\norm#1{\left\| #1 \right\|}

\newcommand\zzero{\boldsymbol{0}}

\def\calN{\mathcal{N}}

\renewcommand\AA{\boldsymbol{\mathit{A}}}
\newcommand\BB{\boldsymbol{\mathit{B}}}

\newcommand\GG{\boldsymbol{\mathit{G}}}

\newcommand\II{\boldsymbol{\mathit{I}}}

\newcommand\LL{\boldsymbol{\mathit{L}}}
\newcommand\MM{\boldsymbol{\mathit{M}}}

\newcommand\PP{\boldsymbol{\mathit{P}}}
\newcommand\QQ{\boldsymbol{\mathit{Q}}}
\newcommand\RR{\boldsymbol{\mathit{R}}}
\renewcommand\SS{\boldsymbol{\mathit{S}}}

\newcommand\UU{\boldsymbol{\mathit{U}}}
\newcommand\VV{\boldsymbol{\mathit{V}}}

\newcommand\XX{\boldsymbol{\mathit{X}}}
\newcommand\YY{\boldsymbol{\mathit{Y}}}

\newcommand\bb{\boldsymbol{\mathit{b}}}

\newcommand\ee{\boldsymbol{\mathit{e}}}

\renewcommand\gg{\boldsymbol{\mathit{g}}}

\renewcommand\ll{\boldsymbol{\mathit{l}}}

\newcommand\qq{\boldsymbol{\mathit{q}}}

\newcommand\rr{\boldsymbol{\mathit{r}}}

\newcommand\vv{\boldsymbol{\mathit{v}}}

\newcommand\xx{\boldsymbol{\mathit{x}}}

\newcommand\yy{\boldsymbol{\mathit{y}}}

\newcommand\mm{\boldsymbol{\mathit{m}}}

\newcommand\LLhat{\boldsymbol{\widehat{\mathit{L}}}}

\newcommand\PPhat{\boldsymbol{\widehat{\mathit{P}}}}

\newcommand\UUhat{\boldsymbol{\widehat{\mathit{U}}}}

\newcommand\ssigma{\boldsymbol{\sigma}}

\newcommand{\PPi}{\boldsymbol{\Pi}}
\newcommand{\SSigma}{\boldsymbol{\Sigma}}

\newdimen\pIR
\newcommand\StevesR{{\rm I\kern\pIR R}}

\def\bvec#1{\mathbf{#1}}

\def\defeq{\stackrel{\mathrm{def}}{=}}

\newsavebox{\Codefig}

\def\lprod{{\ooalign{\hfil\raise .00ex\hbox{\scriptsize L}\hfil\crcr\mathhexbox20D}}}

\newcommand\numberthis{\addtocounter{equation}{1}\tag{\theequation}}

\usepackage{authblk}

\usepackage[textsize=tiny]{todonotes}

\title{Large Growth Happens:  Gaussian Elimination with Partial Pivoting on Random Matrices
\thanks{This research was partially supported by 
	ONR Award N00014-24-12611.}}
\author[1,2]{Daniel A.\ Spielman\thanks{Email: \texttt{daniel.spielman@yale.edu}.}}
\author[1]{Xifan Yu\thanks{Email: \texttt{xifan.yu@yale.edu}.}}
\affil[1]{Department of Computer Science, Yale University}
\affil[2]{Department of Statistics and Data Science, Yale University}

\date{\today}

\begin{document}

\maketitle

\begin{abstract}
    We prove that the probability that Gaussian elimination with partial pivoting on $n \times n$ Gaussian random matrices has growth $\rho$ is at least inverse quasi-polynomial: $\Omega\left(\exp\left(-c \log^2 \left(\frac{\rho}{\sqrt{n}}\right)\log(n)\right)\right)$ for some constant $c > 0$.
    In particular, the probability of growth $\rho \ge n^{1/2 + \epsilon}$ is at least $\exp(- O (\epsilon^2 \log^3 (n) ))$.
    This lower bound breaks standard conjectures in the smoothed and average-case analysis of Gaussian elimination.
    To the best of our knowledge, it is the first non-trivial lower bound on the probability of large growth for Gaussian random matrices.
\end{abstract}

\thispagestyle{empty}
\clearpage
\pagenumbering{arabic}

\allowdisplaybreaks{

\section{Introduction}

Despite the fact that it performs poorly in the worst case, Gaussian\footnote{While we use the name Gaussian elimination, the algorithm is much older. The first reference of which we are aware appeared in the Chinese text ``The Nine Chapters on the Mathematical Arts''.}
elimination with partial pivoting (GEPP) is the most commonly used algorithm to solve systems of linear equations in general dense matrices. 
Urschel~\cite{Urschel2025} writes 
\begin{quote}
This procedure is known to
be horribly unstable in the worst-case. Still,
Gaussian elimination with partial pivoting remains the
most popular method for solving an unstructured linear
system numerically: the backslash ``\textbackslash'' command in Matlab
and Julia, and the linalg.solve() command in Python, all
employ this method (by calling LAPACK routines) when
faced with an arbitrary matrix.
\end{quote}

Researchers have tried to explain the success of GEPP by proving that it performs well on particular classes of matrices~\cite{WilkinsonBook}, by extensive experimentation and reports of empirical success, and by experimental, heuristic, and rigorous analysis of its behavior on random matrices~\cite{SchreiberTrefethen,TrefethenBau,DriscollMaki,HuangTikhomirov}, and on slight random perturbations of arbitrary matrices~\cite{Sankar}. 

The way that GEPP fails is that it can produce highly inaccurate solutions. 
Since the foundational work of Wilkinson~\cite{wilkinson1961error, WilkinsonBook}, it has been known that the relative error of the solution produced on $n$-dimensional systems is bounded by the product of machine precision, a polynomial in $n$, a condition number of the matrix, and the growth factor of GEPP on the matrix.
While the first three terms are essentially unavoidable for every algorithm, the growth factor is where GEPP can fall short.
It can be as large as $2^{n-1}$ for $n$-dimensional matrices.
For analysis based on growth factors, 
this could be disastrous because standard machine precision is around $10^{-16}$, which we could only prove to be enough to solve systems in around 50 dimensions. 
While one could in theory use higher precision arithmetic, using $\Omega(n)$ bits of precision to solve systems of dimension $n$ would be prohibitively slow.
Even though there exist slightly slower variants of Gaussian Elimination that are guaranteed to have small growth\footnote{
The growth of complete pivoting is at most $2^{O(\log^2 n)}$ and was recently shown to be $2^{\Omega(\log^2 n)}$ in the worst case\cite{shah2026entrygrowthgaussianelimination}.}, 
GEPP is favored because it is faster and it is almost never observed to have large growth in practice.  
Algorithms based on the QR-factorization avoid the issue entirely, and only require a logarithmic number of bits.

Experimental explorations of the growth of GEPP on matrices of independent normal entries, beginning with work of MacLeod~\cite{MacLeod} and carried out extensively by Schreiber and Trefethen~\cite{SchreiberTrefethen}, Edelman and Mascarenhas~\cite{EdelmanMascarenhas} and Driscoll and Maki~\cite{DriscollMaki}, led to the conjecture that the probability of growth $\rho$ decreases exponentially in $\rho$. 
Driscoll and Maki fit an exponential function to the density.
Sankar, Spielman, and Teng~\cite{SankarSpielmanTeng} even conjectured this in the (more general) smoothed case: they conjectured that for every matrix $\AA$ of norm one and a standard Gaussian random matrix $\RR$, the probability that $\AA + \sigma \RR$ has growth at least $\rho$ decreases like $n^{O(1)} \exp(-\Omega(\rho^c  \sigma))$, for some constant $c$.  
Trefethen and Bau~\cite[Lecture 22]{TrefethenBau} write ``The density appears to decrease exponentially with $\rho$,'' and Trefethen and Schreiber say ``Experiments leave little doubt that the tail decays exponentially.''
{\it We disprove these conjectures.}

Theorem~\ref{thm:main} says that for standard Gaussian random matrices, the probability of growth at least $\rho$ is at least $\exp\left(-710 \log^2\left(\frac{\rho}{\sqrt{n}}\right)\log(n)\right)$.
    In particular, the probability of growth $\rho = n^{1/2 + \epsilon}$ is at least $\exp(- O (\epsilon^2 \log^3 (n) ))$.
When combined with known upper bounds, described below, this tells us that the tail of $\rho$ is better fit by a log-normal distribution than by an exponential.


Trefethen~\cite{Trefethen2012} asks ``For matrices with independent normally distributed entries, how fast does the probability of encountering a growth factor $> \rho$ decrease as a function of $\rho$? Experiments show that it decreases exponentially, or at least faster than any inverse power of $\rho / n^{1/2}$,'' and then offers \$1,000 for the proof of a conjecture that is both tighter and looser than the disproved conjectures: 
  it is looser in that it only requires a sub-polynomial rate of decay, but it is tighter in that it specifies a sharp bound on the typical growth.
\begin{conjecture}[Trefethen]
For all positive constants $\epsilon$ and $C$, and for all sufficiently large $n$,
  the probability that an $n$-by-$n$ matrix of independent $\calN (0,1)$ entries has 
  growth greater than $n^{1/2 + \epsilon}$ is at most $n^{-C}.$    
\end{conjecture}

In a recent breakthrough, Urschel~\cite{urschel2026growthfactorrandommatrices}
  proved Trefethen's conjecture.
He proves that, for every $\alpha > 0$ there exist constants 
  $C_{\alpha}$ and $n_{\alpha}$, so that for $n > n_{\alpha}$
  the probability $\rho > \sqrt{n} \exp (C_{\alpha} \sqrt{\log n})$
  is at most $n^{-\alpha}$.
Our result is a lower bound showing that Urschel's upper bound is close to tight.
In Urschel's regime, we show that the probability of such growth is at least inverse-quasi-polynomial and in particular at least $n^{-710 C_{\alpha}^2 \log n}.$

We do not claim our results are a reason not to use GEPP, as they do not show a significant probability of large growth. 
Rather, they just show that it is more likely than has been conjectured, and that the upper bounds on the probability of large growth are close to tight.

\subsection*{Role of Artificial Intelligence}

The major ideas for the proofs were proposed by the authors.
In response to a request to compare the two probabilities addressed in Lemma~\ref{lem:conic-prob-transport}, GPT-5.4 extra high produced a proof of a weak form of that lemma, stated in terms of the condition number of the covariance matrix. We greatly simplified that proof and used that lemma to obtain a lower bound of $\exp(-O(\log^3(\rho)))$ for the probability of large growth. 
We later realized that we could strengthen the lemma to produce a bound in terms of the determinant of the covariance matrix, instead of its condition number.
That was the key to our improved lower bound of $\exp(-O(\log^2(\rho/\sqrt{n})\log(n)))$ on the probability that GEPP has large growth.

\section{Preliminaries}\label{sec:prelim}

In this paper, we will use bold letters for matrices and vectors, and non-bold letters for numbers. When we use $E$ to denote a set or event, we use $E^c$ to denote its complement.

For $\AA \in \bR^{n \times n}$, we will use $\|\AA\|_2$ to denote its operator norm, 
$\max_{\xx \neq \bvec{0}} \norm{\AA \xx}_2 / \norm{\xx}_2$.
We use $\kappa(\AA) \colonequals \|\AA\|_2 \|\AA^{-1}\|_2$ to denote its condition number if $\AA$ is non-singular. 
We write $\AA \succeq \BB$ if $\AA - \BB$ is positive semidefinite.

We use Matlab notation to indicate sub-matrices. 
That is, $\AA [i,j]$ is the entry of $\AA$ in row $i$ and column $j$.
$\AA[i,:]$ is the $i$th row of $\AA$ and $\AA[:,j]$ is its $j$th column.
Using $j{:}k$ to write the range from $j$ to $k$, we can write the sub-vector of row $i$ of $\AA$ consisting of entries in columns $j$ through $k$ as $\AA[i, j{:}k]$. Finally $\AA[j{:}k, j{:}k]$ is the sub-matrix consisting of entries in rows and columns between $j$ and $k$. 

Additionally, we introduce the following notation for removing certain entries from vectors and matrices. Let $1 \le i \le n$. For $\vv \in \bR^n$, we will use $\vv[-i] \in \bR^{n-1}$ to denote the $(n-1)$-dimensional vector obtained from $\vv$ by removing entry $i$. For $\AA \in \bR^{n \times n}$, we will use $\AA[-i,-i]$ to denote the $(n-1) \times (n-1)$ matrix obtained from $\AA$ by removing row $i$ and column $i$.

\subsection{Gaussian Elimination with Partial Pivoting}

Let $n \in \bN$, and $\AA \in \bR^{n \times n}$ be a non-singular matrix. Gaussian elimination with partial pivoting (GEPP) applied to $\AA$ constructs a permutation matrix $\PP$, a lower triangular $\LL$, and an upper triangular $\UU$ such that $\PP \AA = \LL\UU$. 
Given this factorization, one can solve systems $\AA \xx = \bb$ by permuting and then solving systems in the triangular matrices $\LL$ and $\UU$, each of which takes time $O(n^2)$.

Gaussian Elimination proceeds by iteratively subtracting multiples of rows of the matrix to convert it into the upper triangular form $\UU$. 
The multiples that are subtracted are encoded in $\LL$.
The naive version of Gaussian Elimination lacks the permutation matrix $\PP$, but can fail if it encounters a zero pivot in which case it would divide by zero.
For numerical computation, small pivots are almost as bad as zero pivots.
So, partial pivoting re-orders the rows so that the entry of the largest absolute value in each column is always the pivot.
In the $k$th iteration, it swaps the $k$th row with the row below that has the largest entry in the $k$th column, and then uses that row to eliminate the entries below it in the $k$th column.

We now give a more formal description of the algorithm.

\begin{enumerate}
    \item Initialize $\AA^{(1)} \colonequals \AA$.
    \item At step $k$ for $1 \le k \le n-1$, select the pivot $i_k \in \{k, k+1, \dots, n\}$ according to the partial pivoting rule:
    \begin{align*}
        i_k \colonequals \argmax_{k \le i \le n}\left|\AA^{(k)}[i,k]\right|.
    \end{align*}
    In the case when the argmax is not unique, break ties arbitrarily. Let $\PP^{(k)}$ denote the permutation matrix that swaps the $k$th row and the $i_k$-th row, and $\BB^{(k)} \colonequals \PP^{(k)} \AA^{(k)}$. Define the multiplier vector $\ll^{(k)} \in [-1,1]^n$ by
    \begin{align*}
        \ll^{(k)}[i] \colonequals \begin{cases}
            \frac{\BB^{(k)}[i,k]}{\BB^{(k)}[k, k]} & \quad \text{ if } k < i ,\\
            0 & \quad \text{ otherwise},
        \end{cases}
    \end{align*}
    and $\bb^{(k)} \in \bR^n$ to be the vector of $k$th row of $\BB^{(k)}$ as
    \begin{align*}
        \bb^{(k)}[i] \colonequals \begin{cases}
            \BB^{(k)}[k,i] & \quad \text{ if } k \le i,\\
            0 & \quad \text{ otherwise}.
        \end{cases}
    \end{align*}
    Now set
    \begin{align*}
        \AA^{(k+1)} \colonequals \BB^{(k)} - \ll^{(k)} (\bb^{(k)})^\top,
    \end{align*}
    or equivalently,
    \begin{align*}
        \AA^{(k+1)} \colonequals (\II_n - \ll^{(k)}\ee_k^\top)\BB^{(k)}.
    \end{align*}
    \item When the algorithm terminates after step $n-1$, construct the permutation matrix $P$ by
    \begin{align*}
        \PP &\colonequals \PP^{(n-1)}\PP^{(n-2)} \dots \PP^{(2)} \PP^{(1)},
    \end{align*}
    and triangular $\LL, \UU \in \bR^{n \times n}$ by
    \begin{align*}
        \LL[i,j] &= \begin{cases}
            1 & \quad \text{ if } i = j,\\
            \hat{\ll}^{(j)}[i] & \quad \text{ if } i > j,\\
            0 & \quad \text{ otherwise, }
        \end{cases}\\
        \UU[i,j] &= \begin{cases}
            \bb^{(i)}[j] & \quad \text{ if } j \ge i,\\
            0 & \quad \text{ otherwise},
        \end{cases}
    \end{align*}
    where we denote $
    \bb^{(n)}[n] \colonequals A^{(n)}[n,n]$, and the permuted multiplier vectors $\hat{\ll}^{(k)}$ by
    \begin{align*}
        \hat{\ll}^{(k)} \colonequals \PP^{(n-1)}\PP^{(n-2)}\dots \PP^{(k+2)}\PP^{(k+1)}\ll^{(k)}, \quad \text{for } 1\le k \le n-1.
    \end{align*}
\end{enumerate}

The numerical error of GEPP (and Gaussian elimination in general) is closely related to the growth factor, defined as follows.
\begin{definition}[Growth Factor]
    The growth factor of $\AA$ is
    \begin{align*}
        \rho(\AA) = \frac{\max_{i,j} \left|\UU[i,j]\right|}{\max_{i,j} \left|\AA[i,j]\right|},
    \end{align*}
where $\UU$ is as computed under exact arithmetic.
\end{definition}

There are many results relating growth and condition numbers to the accuracy of the solution of systems of linear equations that vary in the norms and condition numbers used~\cite{wilkinson1961error,higham2002accuracy,Urschel2025}.
A typical result says that the maximum difference between the entries of $\AA$ and the product $\LLhat \UUhat$, where $\LLhat$ and $\UUhat$ are the matrices computed with machine precision $\eps$, is at most $O(n^3 \eps \rho(\AA) \norm{\AA}_{\infty})$ \cite[Theorem 9.4]{higham2002accuracy}.
This can, in turn, be used to upper bound the relative error of the solution obtained to the system $\AA \xx = \bb$ by applying forward and backward substitution in $\LLhat$ and $\UUhat$.

\subsection{Main Result: lower bound on probability of GEPP having large growth}

We establish the following lower bound on the probability that GEPP has large growth factor on a matrix with independent standard normal entries.

\begin{theorem}\label{thm:main}
    Let $n \ge 122$, and $\AA$ be an $n \times n$ matrix with independent $\calN(0,1)$ entries. Then, for any 
    $$\rho \in \left[100\sqrt{n}\log(n),\; \frac{\left(\frac{7}{5}\right)^{\frac{n-22}{2}}}{3\sqrt{42} (n-22)\sqrt{\log(n)}}\right],$$
    we have
    \begin{align*}
        \Pr\left(\rho(\AA) \ge \rho\right) &\ge \exp\left(-710 \log^2\left(\frac{\rho }{\sqrt{n}}\right)\log(n)\right).
    \end{align*}
\end{theorem}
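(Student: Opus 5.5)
We will exhibit an explicit event of probability at least $\exp(-O(\log^2(\rho/\sqrt n)\log n))$ on which GEPP is forced to have growth about $\rho$. The mechanism is a cascade of doublings, in the spirit of Wilkinson's $2^{n-1}$ example. Suppose that during $m$ consecutive elimination steps the multipliers have magnitude close to $1$ with a consistent sign pattern in the last column. Then the entries of the last column of the active submatrix grow roughly geometrically; a factor of about $(7/5)^{1/2}$ per step is suggested by the upper limit of the $\rho$-range. To reach $\rho/\sqrt n$ we need $m \approx c\log(\rho/\sqrt n)$ such steps. The baseline $\sqrt n$ comes for free, since after partial elimination the entries are typically of order $\sqrt n$; this explains the $\rho\ge 100\sqrt n\log n$ threshold.

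\textbf{Step 1: reduce to the last few pivot steps.} Run GEPP generically for the first $n-m-1$ steps. By rotational invariance, the remaining Schur complement has a tractable Gaussian description. Conditioned on the pivoting history, the trailing block is a Gaussian matrix whose rows have a covariance $\SSigma$ determined by earlier data, with typical scale $\sqrt{n-k}$. This is where a transport lemma of the kind mentioned in the introduction (Lemma~\ref{lem:conic-prob-transport}) enters. It compares the probability that a Gaussian vector with covariance $\SSigma$ lands in a cone with the corresponding probability for an isotropic vector. The loss factor is controlled by $\det\SSigma$ rather than by the condition number.

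\textbf{Step 2: design the event on an $m\times m$ block.} We need a target event on the active $(m+1)\times(m+1)$ block. One option is that the block is close in direction to a Wilkinson-type matrix scaled by $\sqrt n$: unit lower triangular part $-1$ below the diagonal, last column growing. A cleaner option is that a specific set of sign and ratio conditions holds, forcing each pivot to be the intended row and each multiplier to be at least some constant such as $\sqrt{2/5}$ in absolute value. Each such condition is a conic constraint on a Gaussian vector of dimension about $m$. The probability that an $m$-dimensional isotropic Gaussian lies in a cone of constant angle is $e^{-O(m)}$ per step. Over $m$ steps, however, the conditions involve about $m^2$ entries, giving $e^{-O(m^2)}$. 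Finally, the covariance distortion from Step 1 contributes a factor like $\det(\SSigma)^{-1/2}\le n^{O(m)}$. This factor is where the $\log n$ comes from: $e^{-O(m^2\log n)}=\exp(-O(\log^2(\rho/\sqrt n)\log n))$.

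\textbf{Step 3: verify growth and assemble.} On the event, we check inductively that partial pivoting selects the intended rows. We also check that the last-column entry is multiplied by at least $(7/5)^{1/2}$ per step, so $|\UU[n,n]|\gtrsim \sqrt{n}\,(7/5)^{m/2}\ge\rho\max|\AA|$. The denominator $\max|\AA[i,j]|\le 3\sqrt{\log n}$ with high probability; this is the source of the $\sqrt{\log n}$ in the range. The upper limit on $\rho$ ensures $m\le n-22$, so enough of the matrix remains for the bound $\det\SSigma$ and for the concentration estimates used in Step 1. The constant $710$ is obtained by tracking constants through these estimates. The main obstacle is Step 2 combined with Step 1: the conditioned covariance of the remaining block is not isotropic and depends on the history. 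The crux is bounding how much this anisotropy can reduce the probability of the $m$ nested conic conditions, by a factor of only $n^{O(m)}$ per level rather than something exponential in $n$. I would first try to apply the determinant form of the transport lemma row by row, conditioning sequentially, and bound $\det\SSigma$ from below via eigenvalue bounds on Wishart-type matrices.
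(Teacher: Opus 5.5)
\textbf{Verdict: genuine gap.} Your architecture matches the paper's: one conic event per elimination step on the pivot column $\yy\sim\calN(\zzero,\SSigma^{(k)})$, the determinant form of Lemma~\ref{lem:conic-prob-transport}, and a loss of $n^{O(m)}$ at each of $m$ steps. (Conditioned on the first $k-1$ columns, it is the columns, not the rows, of the trailing block that are i.i.d.\ $\calN(\zzero,\SSigma^{(k)})$.)

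Your last-column cascade can replace the paper's growth mechanism. The paper instead grows $\norm{\SSigma^{(k)}}_2$ by a constant factor per step, via a condition on its top eigenvector, and only at the end reads a large pivot off a fresh column. For your version, first condition on column $n$ of $\AA$, which is independent of the pivot columns. Then the required sign pattern $\operatorname{sign}(y_j/y_i)=-\operatorname{sign}(c_jc_i)$ on the current last-column entries $c_j$ is fixed by the history.

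\textbf{Main gap: controlling the transport loss.} This is the step you call the crux, and your suggested route points the wrong way. The loss in Lemma~\ref{lem:conic-prob-transport} is $\norm{\SSigma^{-1}}_2^{d/2}\sqrt{\det\SSigma}$. So you need $\SSigma^{(k)}\succeq\II$ (Proposition~\ref{prop:cov-lower-bound}) and an \emph{upper} bound on $\det\SSigma^{(k)}$. A Wishart-type lower bound on $\det\SSigma$ only gives the $\sqrt n$ baseline, via $\norm{\SSigma}_2\ge(\det\SSigma)^{1/d}$.

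The upper bound must also survive conditioning on your earlier conic events, whose joint probability is $n^{-\Theta(m^2)}$. Unconditional per-step tail bounds therefore cannot be used. Nor is there a useful deterministic bound: $\SSigma^{(k)}=\II+\XX\XX^\top$, and $\XX$ involves the inverse of the leading unit lower-triangular block of $\LL$, which can be exponentially large.

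The paper closes this with two ingredients you lack.
\begin{itemize}
\item Lemma~\ref{lem:cov-det} writes $\det\SSigma^{(k)}=\det(\AA[:,1{:}(k-1)]^\top\AA[:,1{:}(k-1)])/\det(\AA[P,1{:}(k-1)])^2$. The adaptively chosen pivot set $P$ is handled by a union bound over all $\binom{n}{d}$ row sets plus a small-ball estimate. This gives $\det\SSigma\le 512n^{3d+3}$ with probability at least $1/2$, needed only once, at the start of the final phase (Proposition~\ref{prop:det-upper-bound}).
\item Thereafter, Proposition~\ref{prop:det-det-control} gives the deterministic identity $\det\SSigma^{(k+1)}=\det\SSigma^{(k)}\cdot\rr^\top(\SSigma^{(k)})^{-1}\rr\le d\,\det\SSigma^{(k)}$. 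This keeps the per-step loss at $n^{O(d)}$ whatever was conditioned on.
\end{itemize}
Relatedly, paying $\det(\SSigma)^{-1/2}$ once, as your Step~2 reads, would give $e^{-O(m^2)-O(m\log n)}$. The factor is paid afresh at every step, and that is what produces $n^{-O(m^2)}$.

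\textbf{Second error: the denominator threshold in Step~3.} You lower-bound $\Pr(\rho(\AA)\ge\rho)$ by the probability that $\max|\UU[i,j]|$ is large, minus $\Pr(\max_{i,j}|\AA[i,j]|\ge s)$. With $s=3\sqrt{\log n}$ the subtracted term is of order $n^{-5/2}$. That is vastly larger than your main event's probability $n^{-\Theta(m^2)}$, so the difference is negative. The threshold must grow with the number of steps. The paper takes $s=3r\sqrt{\log n}$, whose tail $n^{-9r^2/2+2}$ is below the main term $\exp(-2.369r^2\log n)$. The resulting extra factor $r$ is where the $(n-22)$ in the upper end of the admissible range of $\rho$ comes from.
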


Note that this bound applies for a very large range of $\rho$: from values on the order of $\sqrt{n}\log(n)$ to values that are exponential in $n$.

\begin{remark}
    We have not attempted to optimize the constants appearing in our theorem. One could improve the constant by changing the constant $\frac{3}{2}$ in Proposition~\ref{prop:covariance-growth} and the corresponding constant in Proposition~\ref{prop:suff-cond-norm-increase} and in Proposition~\ref{prop:isotropic-prob}. One could also get a better constant by choosing a larger parameter $T$ and looking at larger values of $n$ and $\rho$ used by the proof of Theorem~\ref{thm:main} in Section~\ref{sec:proof-main}.
\end{remark}

If our goal were to prove that GEPP is accurate, then we would be concerned that we consider the magnitude of entries in the $\UU$ that is obtained by exact arithmetic, rather than by the $\UUhat$ obtained by arithmetic in bounded precision. 
However 
  there is no reason to believe that using bounded precision would aid accuracy.
Moreover, if $\UU$ is large but $\UUhat$ is small then either $\PPhat$ differs from $\PP$ or $\UU$ and $\UUhat$ must differ significantly and $\LLhat \UUhat$ can be far from $\PP \AA$.

\subsection{Prior Work}

The first significant upper bound on the growth of GEPP on random matrices was obtained by Sankar~\cite{Sankar} in the context of smoothed analysis: Sankar proved that the probability that a normal perturbation of variance $\sigma^2$ of an arbitrary matrix of norm at most $1$ has growth $\rho$ decreases at a rate proportional to $(\rho \sigma)^{-\Omega(\log n)},$ after a quasi-polynomial typical value.
For the case of matrices of independent $\calN (0,1)$ variables, Sankar's result says that the probability of growth exceeding $\rho$ is at most 
\[
n^{-c_1 \log(\rho / n^{c_2 \log n})},
\]
for some constants $c_1$ and $c_2$.
Thus, Sankar's bound satisfies the rate of Trefethen's conjecture, but is off by a quasi-polynomial term in the typical value.
It only guarantees that growth is unlikely to be larger than $n^{O(\log n)}$.
But it does show that the probability of growth larger than that decreases at an inverse-quasipolynomial rate.
Our lower bound shows that one can not prove a rate of decay better than inverse-quasipolynomial, although the exact bounds do not match.
One caveat to Sankar's result is that it is for the LU factorization obtained in exact arithmetic, and there is some chance that the growth obtained by floating point computation differs.

Huang and Tikhomirov~\cite{HuangTikhomirov} trade a better typical value for a worse decay rate, while proving bounds that hold under exact arithmetic and under floating point computation.
They also address both the matrix $\UU$ that would be computed in exact arithmetic, and the matrix $\UUhat$ that is computed in floating point arithmetic.
For exact arithmetic, they prove that there exists a constant $C > 1$ so that for all $p \geq 1$ there exists a $n_p$ so that for $n > n_p$, the probability of growth exceeding $n^t$ is at most $5 n^{-p t}$, provided that 
  $t \geq C p^2.$
  
This was the first result showing that the growth of GEPP is at most $n^{O(1)}$ with high probability.
Beyond that, it decays at a sub-inverse polynomial rate.
For any rate $p \geq 1$, it says that the probability of growth $\rho$ decays like $\rho^{-p}$ after 
   $\rho \geq \Omega(n^{C p^2})$.
They then leverage this result to show that the growth of the matrix $\UUhat$ satisfies a similar, although slightly weaker, upper bound.
While one might be tempted to read this bound with $t = p^2$ and allow both to grow with $n$, the form of their bound does not allow that as $n_p$ grows with $p$ in a way that we do not yet fully understand. 

Urschel's theorem~\cite{urschel2026growthfactorrandommatrices} 
  obtains the right order of growth of GEPP on typical matrices, showing that it is at most $n^{1/2 + o(1)}$ with high probability. 
Urschel also improves the smoothed analysis of the growth of GEPP: on a normal perturbation of variance $\sigma^2$ of an arbitrary matrix of norm at most $1$, the growth of GEPP is at most $n^{O(1)} \sigma^{-O(\log\log n)}$ with high probability.

\subsection{Proof Strategy}

We consider the distribution of the last $n-k+1$ entries of the $k$th column of $\AA^{(k)}$.
As the actions of the algorithm before the $k$th step do not depend on the entries in the $k$th column, this vector has a normal distribution conditioned on the realization of the first $k-1$ columns of $\AA$.
Let $\SSigma^{(k)}$ be its covariance matrix. 
We focus on showing that the operator norm of $\SSigma^{(k)}$ can become large for some $k$, and show that when this happens the growth factor is likely to be large as well.

We prove in Proposition~\ref{prop:det-lower-bound} that at the end of the algorithm when $\delta \defeq n-k+1$ is small, the determinant of $\SSigma^{(k)}$ is at least $n^{\delta(1 - o(1))}$ with high probability. This provides a lower bound of $n^{1-o(1)}$ on the operator norm of $\SSigma^{(k)}$. 
Next, we show that there is a reasonable chance that the operator norm of $\SSigma^{(k+1)}$ is at least $7/5$ times the operator norm of $\SSigma^{(k)}$.
This probability has approximately the form $n^{-O(\delta)}$. 
Our proof of this fact uses three ingredients: a) the determinant of $\SSigma^{(k)}$ is at most $n^{O(\delta)}$ with high probability, established in Proposition~\ref{prop:det-upper-bound} and Proposition~\ref{prop:det-det-control}, b) an explicit conic event which is a sufficient condition for the operator norm of $\SSigma^{(k+1)}$ to grow by a factor of $7/5$ compared to the operator norm of $\SSigma^{(k)}$, established in Proposition~\ref{prop:suff-cond-norm-increase}, and c) a comparison inequality for the probability of the conic event under Gaussian measures in Lemma~\ref{lem:conic-prob-transport}. By setting $\delta \approx \log_{7/5} (g)$ and applying this argument in iterations $n-\delta$ through the last, 
we show that with probability approximately $n^{-O(\delta^2)} = \Omega(\exp (-c \log^2(g)\log(n)))$, one of the last $\delta$ covariance matrices probably has operator norm at least $n^{1-o(1)}g$.
This implies that one of the entries in the corresponding column is probably large.

\section{Covariance Update Process}

Let $\MM^{(k)} = \AA^{(k)}[k{:}n, k{:}n]$, and assume that each column of $\MM^{(k)}$ is an independent 
  $\calN(\zzero , \SSigma^{(k)})$ vector.
We will show that each column of $\MM^{(k+1)}$ is also a normal random vector, and we will derive their covariance matrix, 
  $\SSigma^{(k+1)}$.

We begin by re-stating the $k$th elimination step in terms of $\MM^{(k)}$. 
We 
\begin{itemize}
    \item Set $i = \argmax_{i} \abs{\MM^{(k)}[i,1]}$.
    \item Let $\PP_{1,i}$ be the permutation matrix that swaps rows $1$ and $i$.
    \item Set $\BB = \PP_{1,i} \MM^{(k)}$
    \item Set $\ll$ to be the vector such that $\ll[1] = 0$ and $\ll[j] = \BB[j,1] / \BB[1,1]$ for $j > 1$.
    \item Set $\AA^{(k+1)}[k{:}n, k{:}n] = \left( \II - \ll \ee_1^{\top} \right) \BB$.
\end{itemize}

When forming $\MM^{(k+1)}$ we want to omit the first row and column of 
  $\AA^{(k+1)}[k{:}n, k{:}n]$.
So, we let $\PPi_{1}$ be the projection matrix that when multiplied by a column vector removes its first entry.
That is, $\PPi_{1}$ looks like an identity matrix with the first row removed.
We now have
\begin{align*}
 \MM^{(k+1)} 
   & = \PPi_1  (\II - \ll \ee_1^{\top}) \BB \PPi_1^\top \\
  & = \PPi_1  (\II - \ll \ee_1^{\top} ) \PP_{1,i} \MM^{(k)} \PPi_1^\top .
\end{align*}
Since $\PPi_1$ removes the first row of the matrix it multiplies, the first entry of $\ll$ has no impact on this expression.
We find it convenient to work with a vector $\qq$ that agrees with $\ll$, except that it is $1$ in the first coordinate:
$\qq[j] = \BB[j,1] / \BB[1,1]$ for all $j$.
We also define $\rr = \PP_{1,i} \qq$, and recall that $\PP_{1,i}$ is its own inverse.
Continuing our computation, we obtain
\begin{align*}
  \MM^{(k+1)}  & =  \PPi_1  \left( \II - \qq \ee_1^{\top} \right) \PP_{1,i} \MM^{(k)} \PPi_1^\top \\
   & = \PPi_1  (\PP_{1,i}  - \qq \ee_i^{\top} )  \MM^{(k)} \PPi_1^\top, 
     \qquad \text{as $\ee_1^\top \PP_{1,i} = \ee_i^\top$,}\\
  & = \PPi_1  \PP_{1,i} (\II  - \rr \ee_i^{\top} ) \MM^{(k)} \PPi_1^\top.
\end{align*}

As the operator we apply on the left to $\MM^{(k)}$, $\PPi_1  \PP_{1,i} (\II  - \rr \ee_i^{\top} ),$
 only depends on the entries in the first column of $\MM^{(k)}$, and the columns of $\MM^{(k)}$ were independently drawn from 
   $\calN(\zzero , \SSigma^{(k)})$, after conditioning on the first column the remaining columns have distribution $\calN(\zzero , \SSigma^{(k+1)})$
   where 
\[
\SSigma^{(k+1)} = \PPi_1  \PP_{1,i} \left( \II  - \rr \ee_i^{\top} \right) \SSigma^{(k)} 
  \left( \II  - \ee_i \rr^{\top} \right) \PP_{1,i}^{\top} \PPi_1^{\top}.
\]

Finally, we observe that the distribution of $\rr$ has the following simple description in terms of $\MM^{(k)}$:
\[
    i = \argmax_i \left|\MM^{(k)}[i,1] \right|
    \quad \text{and} \quad
    \rr^{(k)} = \frac{\MM^{(k)}[:,1]}{\MM^{(k)}[i,1]}.
\]

We summarize this computation in the following proposition.
\begin{proposition}\label{pro:covariance_update}
Let the columns of $\MM^{(k)}$ be independent $\calN(\zzero, \SSigma^{(k)})$ vectors.
Then, conditioning on the first column $\yy$ of $\MM^{(k)}$, the columns of $\MM^{(k+1)}$ are independent $\calN(\zzero, \SSigma^{(k+1)})$ vectors,
where $\SSigma^{(k+1)}$ may be obtained by 
    \begin{itemize}
    \item sampling $\yy \sim \calN(\zzero, \SSigma^{(k)})$,
    \item setting the pivot $i = \argmax |\yy[i]|$,
    \item defining $\rr = \yy/\yy[i]$,
    \item and setting $\SSigma^{(k+1)} = \PPi_1  \PP_{1,i} \left( \II  - \rr \ee_i^{\top} \right) \SSigma^{(k)} 
  \left( \II  - \ee_i \rr^{\top} \right) \PP_{1,i}^{\top} \PPi_1^{\top}$.
\end{itemize}
\end{proposition}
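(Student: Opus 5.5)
The plan is to verify the computation preceding the statement, organized as a conditioning argument. Write the columns of $\MM^{(k)}$ as $\yy = \MM^{(k)}[:,1], \xx_2, \dots, \xx_m$ with $m = n-k+1$. By assumption these are independent $\calN(\zzero, \SSigma^{(k)})$ vectors. The first step is to observe that the pivot index $i$, the swap $\PP_{1,i}$, and the vector $\rr = \yy/\yy[i]$ are all measurable functions of $\yy$ alone. Note that $\yy[i] \neq 0$ almost surely when $\SSigma^{(k)} \neq \zzero$; otherwise we adopt a convention for ties and zero pivots, which occur with probability zero or can be handled separately.

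Next, we use the identity already derived in the text:
\[
\MM^{(k+1)} = \PPi_1 \PP_{1,i}\left(\II - \rr\ee_i^\top\right)\MM^{(k)}\PPi_1^\top.
\]
Right multiplication by $\PPi_1^\top$ deletes the first column of $\MM^{(k)}$. Hence the $j$th column of $\MM^{(k+1)}$ equals $\GG \xx_{j+1}$, where $\GG \colonequals \PPi_1\PP_{1,i}(\II - \rr\ee_i^\top)$ is an $(m-1)\times m$ matrix that depends only on $\yy$. I will double-check the step $\ee_1^\top\PP_{1,i} = \ee_i^\top$, and the rewriting $\PP_{1,i} - \qq\ee_i^\top = \PP_{1,i}(\II - \rr\ee_i^\top)$ via $\rr = \PP_{1,i}\qq$. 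I will also check that $\qq = \BB[:,1]/\BB[1,1] = \PP_{1,i}\yy/\yy[i]$, so that indeed $\rr = \yy/\yy[i]$.

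Now condition on $\yy$. By independence of the columns, the conditional law of $(\xx_2,\dots,\xx_m)$ given $\yy$ is still that of $m-1$ i.i.d.\ $\calN(\zzero,\SSigma^{(k)})$ vectors. Applying the fixed (conditionally deterministic) linear map $\GG$ to each column preserves independence. A linear image of a centered Gaussian is centered Gaussian with covariance $\GG\SSigma^{(k)}\GG^\top$. Expanding $\GG^\top = (\II - \ee_i\rr^\top)\PP_{1,i}^\top\PPi_1^\top$ then gives exactly the claimed formula for $\SSigma^{(k+1)}$. Finally, the procedure of sampling $\yy$, taking the pivot, and forming $\rr$ reproduces the joint law of $(\yy, \SSigma^{(k+1)})$.

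There is no serious obstacle. The only point requiring care is the justification that conditioning on $\yy$ does not alter the distribution of the other columns, and that $\GG$ is a function only of $\yy$. That partial pivoting at step $k$ inspects only the first column is what makes this work, together with handling the measure-zero event of ties or a zero pivot.
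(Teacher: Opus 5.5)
Your proposal is correct and follows the paper's argument essentially verbatim. The paper likewise derives $\MM^{(k+1)} = \PPi_1 \PP_{1,i}(\II - \rr\ee_i^\top)\MM^{(k)}\PPi_1^\top$ through the auxiliary vector $\qq$ with $\rr = \PP_{1,i}\qq = \yy/\yy[i]$. It then concludes, as you do, that since the left factor depends only on the first column, conditioning on that column maps the remaining independent $\calN(\zzero,\SSigma^{(k)})$ columns linearly to independent Gaussians with the stated covariance; your explicit remarks on measurability and on the probability-zero tie and zero-pivot events are details the paper leaves implicit.
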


We will analyze the growth factor of GEPP on Gaussian random matrices through the lens of this covariance update process.

To facilitate this analysis later, we state two facts about the operators we just used and the covariance matrices.

\begin{proposition}\label{pro:PiPe}
\[ \PPi_1 \PP_{1,i} \ee_i = \bvec{0} . \]
\end{proposition}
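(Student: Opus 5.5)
This identity is a direct computation with the definitions, and I would verify it by following the basis vector $\ee_i$ through the two operators in turn. First apply the transposition $\PP_{1,i}$. Because it swaps coordinates $1$ and $i$, it maps $\ee_i$ to $\ee_1$:
\[
\PP_{1,i}\ee_i = \ee_1 .
\]
If $i = 1$, then $\PP_{1,i}$ is the identity and we again get $\ee_1$, so this case needs no separate treatment.

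Next apply $\PPi_1$. This is the identity matrix with its first row deleted, so multiplying a vector by it simply removes that vector's first entry. The vector $\ee_1$ has a $1$ in its first coordinate and zeros everywhere else. Deleting the first entry therefore leaves the all-zeros vector in $\bR^{m-1}$, where $m$ is the dimension of the current block, and so $\PPi_1\ee_1 = \bvec{0}$.

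Composing the two steps gives $\PPi_1\PP_{1,i}\ee_i = \PPi_1\ee_1 = \bvec{0}$, which is the claim. There is no real obstacle. The only care needed is to keep track of which coordinate $\PP_{1,i}$ sends $\ee_i$ to, and this is exactly the fact $\ee_1^\top\PP_{1,i} = \ee_i^\top$ already used in the derivation of Proposition~\ref{pro:covariance_update}, transposed using the symmetry of $\PP_{1,i}$.

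Conceptually, this says that the pivot row $i$ is moved to the top and then discarded. That is why, in Proposition~\ref{pro:covariance_update}, the component $\rr\ee_i^\top$ involving the pivot coordinate interacts cleanly with the projection.
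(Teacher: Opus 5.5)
Your computation, $\PP_{1,i}\ee_i = \ee_1$ followed by $\PPi_1\ee_1 = \bvec{0}$, is correct, and it is exactly the immediate verification the paper intends; the paper states this fact without proof. One small nit in your closing aside: transposing $\ee_1^\top\PP_{1,i} = \ee_i^\top$ gives $\PP_{1,i}\ee_1 = \ee_i$, so you also need $\PP_{1,i}^2 = \II$ to get $\PP_{1,i}\ee_i = \ee_1$, though your main argument does not rely on that remark.
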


\begin{proposition}\label{pro:remove_opers}
\[
\norm{ \SSigma^{(k+1)}}_2
= 
\norm{  \left( \II  - \rr \ee_i^{\top} \right) \SSigma^{(k)} 
  \left( \II  - \ee_i \rr^{\top} \right) }_2 .
\]
\end{proposition}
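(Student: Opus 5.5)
The identity in Proposition~\ref{pro:remove_opers} holds because $\PPi_1\PP_{1,i}$ acts on the range of $\XX \colonequals (\II - \rr\ee_i^\top)\SSigma^{(k)}(\II - \ee_i\rr^\top)$ as an isometry onto its image. Conjugating by it therefore preserves the operator norm. I would organize the argument in three short steps.

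\textbf{Step 1: $\XX$ annihilates $\ee_i$.} Since $\rr[i] = \yy[i]/\yy[i] = 1$, we have $\ee_i^\top\rr = 1$. Hence
\[
(\II - \ee_i\rr^\top)^\top \ee_i = (\II - \rr\ee_i^\top)\ee_i = \ee_i - \rr,
\]
which is not what we want. Instead, look at the left factor transposed:
\[
\ee_i^\top(\II - \rr\ee_i^\top) = \ee_i^\top - \ee_i^\top\rr\,\ee_i^\top = \bvec{0}^\top .
\]
So $\ee_i^\top\XX = \bvec{0}^\top$, and by symmetry of $\XX$ also $\XX\ee_i = \bvec{0}$. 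Thus $\XX$ is symmetric positive semidefinite with $\ee_i$ in its kernel. Its range, and every eigenvector with nonzero eigenvalue, lies in $\ee_i^\perp$.

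\textbf{Step 2: $\PP_{1,i}$ maps $\ee_i^\perp$ onto $\ee_1^\perp$, on which $\PPi_1$ is an isometry.} Write $\QQ = \PPi_1\PP_{1,i}$. The permutation $\PP_{1,i}$ is orthogonal and sends $\ee_i$ to $\ee_1$, so it maps $\ee_i^\perp$ isometrically onto $\ee_1^\perp$. The map $\PPi_1$ deletes the first coordinate, so it is an isometry from $\ee_1^\perp$ onto $\bR^{n-k}$. Therefore $\QQ$ restricted to $\ee_i^\perp$ is an isometry onto $\bR^{n-k}$, and $\QQ\ee_i = \bvec{0}$, which is Proposition~\ref{pro:PiPe}. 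Equivalently, $\QQ^\top\QQ = \II - \ee_i\ee_i^\top$ is the orthogonal projector onto $\ee_i^\perp$, and $\QQ\QQ^\top = \II_{n-k}$.

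\textbf{Step 3: compare the norms.} By Proposition~\ref{pro:covariance_update}, $\SSigma^{(k+1)} = \QQ\XX\QQ^\top$. Let $\xx$ be a unit eigenvector of $\XX$ for its top eigenvalue $\lambda = \norm{\XX}_2$. If $\lambda = 0$ both sides vanish, so assume $\lambda > 0$. Then $\xx \in \ee_i^\perp$ by Step 1, and $\QQ^\top\QQ\xx = \xx$. Setting $\vv = \QQ\xx$, a unit vector, gives
\[
\vv^\top\QQ\XX\QQ^\top\vv = \xx^\top\XX\xx = \lambda ,
\]
so $\norm{\SSigma^{(k+1)}}_2 \ge \lambda$. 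Conversely, $\norm{\QQ\XX\QQ^\top}_2 \le \norm{\QQ}_2^2\,\norm{\XX}_2 = \lambda$, since $\norm{\QQ}_2 = 1$. The two bounds together give the equality.

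None of these steps is a real obstacle. The only delicate point is in Step 1: one must use $\rr[i]=1$ to get $\ee_i^\top(\II - \rr\ee_i^\top) = \bvec{0}^\top$, and not confuse which side of the factor kills $\ee_i$.
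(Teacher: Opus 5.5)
Your proof is correct and uses the same idea as the paper's. Conjugating by $\PPi_1\PP_{1,i}$ only permutes rows and columns and then deletes row and column $i$ of $\XX$, which vanish because $\rr[i]=1$, and neither operation changes the operator norm. You spell out the details the paper's one-line argument leaves implicit, namely the vanishing of that row and column and the identities $\QQ^\top\QQ = \II - \ee_i\ee_i^\top$, $\QQ\QQ^\top = \II$; the false start in Step 1 can be dropped, since $(\II - \ee_i\rr^\top)\ee_i = \bvec{0}$ gives $\XX\ee_i = \bvec{0}$ directly.
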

\begin{proof}
We have 
\[ \SSigma^{(k+1)} =  \PPi_1  \PP_{1,i} \left( \II  - \rr \ee_i^{\top} \right) \SSigma^{(k)} 
  \left( \II  - \ee_i \rr^{\top} \right) \PP_{1,i}^{\top} \PPi_1^{\top} , \]
but the operators $\PPi_1  \PP_{1,i}$ that multiply on the left and right only serve to permute the rows and columns 
  and to remove an all-zero row and column.
Neither of these operations changes the operator norm of a matrix.
\end{proof}

\section{Mathematical Tools}
In this section, we review and prove the relevant tools for establishing our main result.

\subsection{Facts about Gaussians}

We will use $\phi(x)$ and $\Phi(x)$ to denote the probability density function and the cumulative distribution function of the standard normal distribution respectively.

\begin{lemma}\label{lem:gaussian-upper-tail}
    Let $x \sim \calN(0,1)$. Then, for every $t > 0$,
    \begin{align*}
        \Pr(|x| \ge t) &\le \frac{\sqrt{2}}{t\sqrt{\pi}} \exp\left(-\frac{t^2}{2}\right).
    \end{align*}
\end{lemma}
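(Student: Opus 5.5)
By symmetry, $\Pr(|x| \ge t) = 2\Pr(x \ge t) = 2\int_t^\infty \phi(s)\,ds$. So it suffices to prove the classical Mills-ratio bound $\int_t^\infty \phi(s)\,ds \le \phi(t)/t$ for $t>0$, and then double it.

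For the one-sided tail, I will use that on the domain of integration $s \ge t > 0$ we have $s/t \ge 1$, so the integrand is dominated:
\begin{align*}
\int_t^\infty \frac{1}{\sqrt{2\pi}} e^{-s^2/2}\,ds \;\le\; \int_t^\infty \frac{s}{t}\cdot\frac{1}{\sqrt{2\pi}} e^{-s^2/2}\,ds \;=\; \frac{1}{t\sqrt{2\pi}}\Bigl[-e^{-s^2/2}\Bigr]_t^\infty \;=\; \frac{1}{t\sqrt{2\pi}}\, e^{-t^2/2}.
\end{align*}
The middle integral is elementary because $s e^{-s^2/2}$ has antiderivative $-e^{-s^2/2}$.

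Multiplying by $2$ gives $\Pr(|x|\ge t) \le \frac{2}{t\sqrt{2\pi}} e^{-t^2/2}$. Since $\frac{2}{\sqrt{2\pi}} = \frac{\sqrt{2}}{\sqrt{\pi}}$, this is exactly the stated bound. The only step requiring a choice is inserting the factor $s/t \ge 1$; everything else is routine, so I do not expect an obstacle.
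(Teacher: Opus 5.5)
Your proof is correct and matches the paper's route: the paper also reduces to the one-sided Mills-ratio bound $\int_t^\infty \phi(s)\,ds \le \phi(t)/t$ and doubles it by symmetry. The only difference is that the paper cites Feller for that bound, while you prove it directly by inserting the factor $s/t \ge 1$.
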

\begin{proof}
This follows from \cite[Lemma 2 of Section VII.1]{Feller1968}, which says that for the density of the standard normal,
  $\phi(x) \colonequals (1 / \sqrt{2 \pi}) \exp (-x^2/2)$, 
  \[
  \int_{x = t}^{\infty} \phi(x) dx \leq \phi(t) / t.
  \]
\end{proof}

\begin{lemma}\label{lem:gaussian-small-ball}
     Let $x \sim \calN(0,1)$. Then, for every $t > 0$,
     \begin{align*}
        \Pr(|x| \le t) &\le t\sqrt{\frac{2}{\pi}} .
    \end{align*}
\end{lemma}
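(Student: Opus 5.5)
The plan is to bound the density of $x$ pointwise and integrate over the short interval $[-t,t]$. The standard normal density is $\phi(x) = \frac{1}{\sqrt{2\pi}}\exp(-x^2/2)$. Since $\exp(-x^2/2) \le 1$ for every real $x$, its maximum is attained at $x=0$, so $\phi(x) \le \phi(0) = \frac{1}{\sqrt{2\pi}}$ for all $x$.

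Integrating this pointwise bound gives
\begin{align*}
\Pr(|x| \le t) = \int_{-t}^{t} \phi(x)\, dx \le 2t \cdot \frac{1}{\sqrt{2\pi}} = t\sqrt{\frac{2}{\pi}},
\end{align*}
which is exactly the claimed inequality, valid for every $t > 0$.

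There is no real obstacle. The only step needing any care is the constant, which follows from the identity $\frac{2}{\sqrt{2\pi}} = \sqrt{\frac{2}{\pi}}$. The bound is trivially true (but loose) once $t\sqrt{2/\pi} \ge 1$, and the argument above does not need to treat that case separately.
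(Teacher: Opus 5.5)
Your proof is correct and is essentially the paper's argument: both bound the density on $[-t,t]$ by $\phi(0) = 1/\sqrt{2\pi}$ and integrate to get $2t\phi(0) = t\sqrt{2/\pi}$. The only cosmetic difference is that the paper justifies $\phi(s) \le \phi(0)$ by noting $\phi$ is decreasing on $s \ge 0$ (and symmetric), while you use $\exp(-x^2/2) \le 1$ directly.
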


\begin{proof}
    Note that $\phi(s)$ is decreasing for $s \ge 0$. Thus,
    \begin{align*}
        \Pr(|x| \le t) = \int_{-t}^t \phi(s) ds \le 2t\phi(0) = t\sqrt{\frac{2}{\pi}}.
    \end{align*}
\end{proof}

The following lemma follows from a simple union bound and Lemma~\ref{lem:gaussian-upper-tail}.
\begin{lemma}\label{lem:gaussian-max-upper-tail}
    Let $x_1, \dots, x_n$ be independent standard normal variables. Let $M \colonequals \max_i |x_i|$. Then, for every $t > 0$,
    \begin{align*}
        \Pr(M \ge t) \le \frac{n\sqrt{2}}{t\sqrt{\pi}}\exp\left(-\frac{t^2}{2}\right).
    \end{align*}
\end{lemma}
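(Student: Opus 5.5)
The plan is a union bound over the $n$ variables, with Lemma~\ref{lem:gaussian-upper-tail} applied to each one. The event $\{M \ge t\}$ is the same as $\bigcup_{i=1}^n \{|x_i| \ge t\}$: the maximum of the $|x_i|$ is at least $t$ exactly when some $|x_i|$ is at least $t$. Therefore
\begin{align*}
    \Pr(M \ge t) \le \sum_{i=1}^n \Pr(|x_i| \ge t).
\end{align*}

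Each $x_i$ is standard normal, so Lemma~\ref{lem:gaussian-upper-tail} applies for every $t > 0$ and gives
\begin{align*}
    \Pr(|x_i| \ge t) \le \frac{\sqrt{2}}{t\sqrt{\pi}} \exp\left(-\frac{t^2}{2}\right).
\end{align*}
Summing these $n$ identical bounds yields the claimed inequality.

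Independence of the $x_i$ is not needed; the union bound holds for arbitrary standard normal marginals. No step here is a genuine obstacle. The only point to check is that the event identity is stated with non-strict inequalities on both sides, so that $\{M \ge t\}$ matches the events controlled by Lemma~\ref{lem:gaussian-upper-tail}.
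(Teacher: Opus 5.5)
Your proposal is correct and matches the paper's argument exactly: a union bound over the $n$ events $\{|x_i| \ge t\}$, each controlled by Lemma~\ref{lem:gaussian-upper-tail}. Your remark that independence is not needed is also correct.
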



Next, we present an important tool in our analysis, which is a comparison inequality for the probability of a conic event under a generic Gaussian distribution to its probability under an isotropic Gaussian distribution.

\begin{definition}
    Let $E \subseteq \bR^d$. We say $E$ is conic if for every $\xx \in E$ and $t \in \bR^+$, we have $t\xx \in E$.  
\end{definition}

\begin{lemma}[Comparison Inequality for Conic Event under Gaussian Measure]\label{lem:conic-prob-transport}
    Let $\SSigma$ be a $d \times d$ positive-definite  matrix, and $E \subseteq \bR^d$ be conic. Then,
    \begin{align*}
        \Pr_{\yy \sim \calN(\zzero, \SSigma)}\left(\yy \in E\right) \ge \frac{1}{\|\SSigma^{-1}\|_2^{\frac{d}{2}}\sqrt{\det(\SSigma)}} \cdot \Pr_{\yy \sim \calN(\zzero, \II_d)}\left(\yy \in E\right).
    \end{align*}
\end{lemma}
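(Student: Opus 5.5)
Since $E$ is conic, the probability of $E$ under any centered Gaussian depends only on the induced distribution of the direction $\yy/\norm{\yy}_2$ on the unit sphere. The plan is to write both probabilities as integrals of densities over $E$ and compare the densities pointwise after rescaling radially, absorbing the scale factor with the conic property. Concretely, write
\begin{align*}
\Pr_{\yy \sim \calN(\zzero,\SSigma)}(\yy \in E) = \int_E \frac{1}{(2\pi)^{d/2}\sqrt{\det(\SSigma)}} \exp\left(-\tfrac{1}{2}\xx^\top \SSigma^{-1}\xx\right) d\xx .
\end{align*}
Set $\lambda \colonequals \norm{\SSigma^{-1}}_2$, so that $\xx^\top\SSigma^{-1}\xx \le \lambda \norm{\xx}_2^2$ for every $\xx$. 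This yields the pointwise density bound
\begin{align*}
\frac{1}{(2\pi)^{d/2}\sqrt{\det(\SSigma)}} \exp\left(-\tfrac12 \xx^\top\SSigma^{-1}\xx\right) \ge \frac{1}{(2\pi)^{d/2}\sqrt{\det(\SSigma)}} \exp\left(-\tfrac{\lambda}{2}\norm{\xx}_2^2\right).
\end{align*}
Integrating over $E$, the left side gives $\Pr_{\calN(\zzero,\SSigma)}(E)$. The right side is $\frac{1}{\sqrt{\det \SSigma}}\,\lambda^{-d/2}$ times $\Pr_{\yy \sim \calN(\zzero, \lambda^{-1}\II_d)}(\yy\in E)$, because the $\calN(\zzero,\lambda^{-1}\II_d)$ density carries the normalization $\lambda^{d/2}/(2\pi)^{d/2}$.

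Finally, since $E$ is conic, the change of variables $\yy = \lambda^{-1/2}\zz$ gives $\Pr_{\calN(\zzero,\lambda^{-1}\II_d)}(E) = \Pr_{\calN(\zzero,\II_d)}(E)$. Combining,
\begin{align*}
\Pr_{\calN(\zzero,\SSigma)}(E) \ge \frac{1}{\norm{\SSigma^{-1}}_2^{d/2}\sqrt{\det(\SSigma)}}\Pr_{\calN(\zzero,\II_d)}(E),
\end{align*}
which is the claimed bound.

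The only delicate points are routine. The set $E$ must be measurable, which is implicitly assumed. Whether $\zzero\in E$ is irrelevant, since it is a null set. The conic property is used with $t=\lambda^{-1/2}>0$, so the scaling maps $E$ onto itself (both $tE\subseteq E$ and $t^{-1}E\subseteq E$ hold). Choosing the isotropic comparison scale $\lambda=\norm{\SSigma^{-1}}_2$, the largest eigenvalue of $\SSigma^{-1}$, is exactly what makes the exponent comparison go in the right direction. The determinant factor then comes for free from the normalizing constants.
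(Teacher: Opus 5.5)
Your proof is correct and is essentially the paper's argument: a pointwise density comparison via $\xx^\top\SSigma^{-1}\xx \le \norm{\SSigma^{-1}}_2\norm{\xx}_2^2$, followed by the scale invariance of conic events. The only difference is bookkeeping: the paper first normalizes $\SSigma$ so its smallest eigenvalue is $1$ and then rescales back, while you keep $\SSigma$ fixed and compare against $\calN(\zzero,\norm{\SSigma^{-1}}_2^{-1}\II_d)$.
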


\begin{proof}
Assume for now that the smallest eigenvalue of $\SSigma$ is 1,
  which tells us that for all vectors $\yy$, $\yy^\top \SSigma^{-1} \yy \leq \norm{\yy}_2^2$.

    For a $d$-dimensional Gaussian random vector of covariance $\SSigma$
    and a measurable set $E$,
    \begin{align*}
    \Pr_{\yy \sim \calN(\zzero, \SSigma)}(\yy \in E)
&= \int_{\yy \in \bR^d} \frac{1}{(2\pi)^{\frac{d}{2}}\sqrt{\det(\SSigma)}} \exp\left(-\frac{1}{2} \yy^\top \SSigma^{-1} \yy\right) \One_{\yy \in E} \  d\yy 
    \end{align*}

Our assumption about the scaling of $\SSigma$ implies 
$\exp\left(-\frac{1}{2} \yy^\top \SSigma^{-1} \yy\right) \geq 
\exp(-\frac{1}{2} \norm{\yy}_2^2 )$, and so 
  this integral is at least 
    \begin{align*}
 \int_{\yy \in \bR^d} \frac{1}{(2\pi)^{\frac{d}{2}}\sqrt{\det(\SSigma)}} \exp\left(-\frac{1}{2}  \norm{\yy}_2^2 \right) \One_{\yy \in E} \  d\yy 
 & =
    \frac{1}{\sqrt{\det(\SSigma)} }  \Pr_{\yy \sim \calN(\zzero, \II_d)}\left(\yy \in E\right).  
\end{align*}

Finally, for general $d \times d$ positive-definite $\SSigma$, we may multiply it by $\|\SSigma^{-1}\|_2$ to obtain a positive-definite matrix with smallest eigenvalue $1$. As $E$ is conic, multiplying $\SSigma$ by a constant does not change the probability. We thus conclude 
\begin{align*}
        \Pr_{\yy \sim \calN(\zzero, \SSigma)}\left(\yy \in E\right) &\ge \frac{1}{\sqrt{\det\left(\|\SSigma^{-1}\|_2\SSigma\right)}} \cdot \Pr_{\yy \sim \calN(\zzero, \II_d)}\left(\yy \in E\right)\\
        &= \frac{1}{\|\SSigma^{-1}\|_2^{\frac{d}{2}}\sqrt{\det(\SSigma)}} \cdot \Pr_{\yy \sim \calN(\zzero, \II_d)}\left(\yy \in E\right).
    \end{align*}

\end{proof}

The following lemma is a well-known result due to Bartlett, which states that in the QR decomposition of an $n \times p$ Gaussian matrix where $p \le n$, its Q factor and R factor are independent. See \cite[Section 3.2.4]{muirhead2009aspects}.

\begin{lemma}\label{lem:gaussian-QR}
    Let $n \ge p$, and $\GG$ be an $n \times p$ matrix with independent $\calN(0,1)$ entries. Let the QR decomposition of $\GG$ be
    \begin{align*}
        \GG &= \QQ \RR,
    \end{align*}
    where $\QQ^\top\QQ = \II_p$ and $\RR$ is upper triangular with positive diagonal entries. Then, $\QQ$ and $\RR$ are independent.
\end{lemma}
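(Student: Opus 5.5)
The plan is to derive the independence from the rotational invariance of the Gaussian matrix $\GG$, combined with the uniqueness of the QR decomposition when $\RR$ is required to have positive diagonal.

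First, I will check that the decomposition is well defined almost surely. Since $p \le n$ and $\GG$ has a density on $\bR^{n \times p}$, the matrix $\GG$ has full column rank with probability $1$. On the full-rank set, Gram--Schmidt applied to the columns of $\GG$ produces the unique factorization $\GG = \QQ \RR$ with $\QQ^\top \QQ = \II_p$ and $\RR$ upper triangular with positive diagonal. The maps $\GG \mapsto \QQ(\GG)$ and $\GG \mapsto \RR(\GG)$ are given by explicit rational and square-root formulas, so they are measurable and even continuous there. Uniqueness gives the key equivariance: for every $n \times n$ orthogonal $\mathbf{O}$, the product $(\mathbf{O}\QQ)\RR$ is a valid QR decomposition of $\mathbf{O}\GG$. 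Hence $\QQ(\mathbf{O}\GG) = \mathbf{O}\QQ(\GG)$ and $\RR(\mathbf{O}\GG) = \RR(\GG)$.

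Next, I will introduce an auxiliary random orthogonal matrix $\mathbf{O}$, drawn from Haar measure on the orthogonal group and independent of $\GG$. The columns of $\GG$ are independent $\calN(\zzero, \II_n)$ vectors, so conditionally on $\mathbf{O}$ the matrix $\mathbf{O}\GG$ again has independent $\calN(0,1)$ entries. Therefore $\mathbf{O}\GG \overset{d}{=} \GG$. By the equivariance above,
\[
(\QQ, \RR) \overset{d}{=} (\QQ(\mathbf{O}\GG), \RR(\mathbf{O}\GG)) = (\mathbf{O}\QQ, \RR).
\]
It then suffices to show that, in the pair $(\mathbf{O}\QQ, \RR)$, the first coordinate is independent of the second and has a fixed law.

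To see this, condition on $\GG$, which fixes $\QQ$ and $\RR$. For any fixed $\QQ$ with orthonormal columns, the law of $\mathbf{O}\QQ$ is the pushforward of Haar measure under $\mathbf{O} \mapsto \mathbf{O}\QQ$. This pushforward is the uniform (Haar) distribution $\mu$ on the Stiefel manifold of $n \times p$ matrices with orthonormal columns. The reason is that $\mathbf{O}(n)$ acts transitively on that manifold, and the invariant probability measure there is unique. Crucially, $\mu$ does not depend on $\QQ$. Consequently, for measurable sets $A$ and $B$,
\[
\Pr(\mathbf{O}\QQ \in A,\ \RR \in B) = \Ex\left[\One_{\RR \in B}\,\mu(A)\right] = \mu(A)\Pr(\RR \in B).
\]
So $\mathbf{O}\QQ$ and $\RR$ are independent, with $\mathbf{O}\QQ \sim \mu$. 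Transferring this through the distributional equality, $\QQ$ and $\RR$ are independent, and as a byproduct $\QQ$ is Haar distributed on the Stiefel manifold.

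The step I expect to need the most care is the measure-theoretic bookkeeping rather than any computation. It has two parts. One is that the QR map is defined and equivariant off a null set. The other is that the pushforward of Haar measure on $\mathbf{O}(n)$ under $\mathbf{O} \mapsto \mathbf{O}\QQ$ is the same measure for every $\QQ$ with orthonormal columns. For the latter, I would use transitivity: writing $\QQ' = \mathbf{V}\QQ$ with $\mathbf{V}$ orthogonal, the right-invariance of Haar measure shows $\mathbf{O}\QQ' = (\mathbf{O}\mathbf{V})\QQ \overset{d}{=} \mathbf{O}\QQ$. This avoids having to invoke uniqueness of invariant measures explicitly.
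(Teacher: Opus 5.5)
Your proof is correct, but it is not the route the paper points to. The paper gives no proof of its own and cites Bartlett's result via Muirhead, Section 3.2.4. There, independence comes from a change of variables. One computes the Jacobian of $\GG \mapsto (\QQ,\RR)$, namely $d\GG = \prod_{i=1}^p \RR[i,i]^{n-i}\, d\RR\, (\QQ^\top d\QQ)$. One then notes that the Gaussian density $\exp(-\tfrac12 \operatorname{tr}(\GG^\top\GG)) = \exp(-\tfrac12 \operatorname{tr}(\RR^\top\RR))$ depends only on $\RR$. So the joint density factors into a function of $\RR$ times the invariant measure on the Stiefel manifold.

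That route requires a nontrivial exterior-algebra computation, but it also gives the explicit Bartlett law of $\RR$: independent entries, $\RR[i,i]^2 \sim \chi^2_{n-i+1}$, standard normals above the diagonal. Lemma~\ref{lem:wishart-det} follows from that at once.

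Your argument uses an auxiliary independent Haar $\mathbf{O}$, uniqueness of the QR factorization, and right-invariance of Haar measure. It needs no computation, and it avoids regular conditional distributions and any appeal to uniqueness of invariant measures. It yields independence and the Haar law of $\QQ$, but says nothing about the law of $\RR$. That is all the paper needs in Proposition~\ref{prop:det-lower-bound}, since the law of $\det(\RR)^2$ is taken separately from Lemma~\ref{lem:wishart-det}.

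Your argument also extends verbatim to any random $\GG$ that has full column rank almost surely and satisfies $\mathbf{O}\GG \overset{d}{=} \GG$ for every orthogonal $\mathbf{O}$. The Jacobian proof, by contrast, is tied to the Gaussian density.
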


\begin{definition}
For $k \in \bZ^+$, the chi-squared distribution $\chi^2_k$ with $k$ degrees of freedom is the distribution of the sum of the squares of $k$ independent standard normal variables.
\end{definition}
The moments of chi-squared random variables are as follows.

\begin{lemma}[{\cite[Equation (3.7)]{janson2010moments}}] \label{lem:chi-squared-moments}
    Let $x \sim \chi^2_k$. Then, for every $m > - \frac{k}{2}$,
    \begin{align*}
        \Ex[x^m] = 2^m \frac{\Gamma(m + \frac{k}{2})}{\Gamma(\frac{k}{2})}.
    \end{align*}
\end{lemma}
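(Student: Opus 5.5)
This moment formula is classical, so the plan is a direct computation from the density of the chi-squared distribution. For $k \in \bZ^+$ the density of $x \sim \chi^2_k$ on $(0,\infty)$ is
\begin{align*}
f_k(x) = \frac{1}{2^{k/2}\Gamma(k/2)} x^{\frac{k}{2}-1} e^{-x/2}.
\end{align*}
If we take this as known, we start from there. If not, we obtain it in two short steps. First, we compute the density of a single $z^2$ with $z \sim \calN(0,1)$ by a change of variables, which gives $f_1$. Second, we induct on $k$ by convolution: $f_{k} * f_{1}$ reduces, via the substitution $s = xt$, to a Beta integral equal to $B(\tfrac{k}{2},\tfrac12) = \Gamma(\tfrac{k}{2})\Gamma(\tfrac12)/\Gamma(\tfrac{k+1}{2})$, and this yields $f_{k+1}$.

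Given the density, the moment is
\begin{align*}
\Ex[x^m] = \frac{1}{2^{k/2}\Gamma(k/2)} \int_0^\infty x^{m+\frac{k}{2}-1} e^{-x/2}\, dx .
\end{align*}
The integral converges at $0$ exactly when $m + \frac{k}{2} > 0$, which is the stated hypothesis $m > -\frac{k}{2}$. It converges at $\infty$ because of the exponential factor. Substituting $x = 2u$ turns the integral into $2^{m+k/2}\int_0^\infty u^{m+k/2-1}e^{-u}\,du = 2^{m+k/2}\Gamma(m+\tfrac{k}{2})$. Dividing by $2^{k/2}\Gamma(k/2)$ gives $2^m\Gamma(m+\frac{k}{2})/\Gamma(\frac{k}{2})$, as claimed.

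There is no real obstacle. The only points needing care are that $m$ may be negative and non-integer, so convergence near $0$ must be checked, and that the normalization of the density must be right. Both are handled above.
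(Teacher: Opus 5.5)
Your computation is correct and complete. The density and its normalization (checked by the Beta-integral convolution), the convergence condition $m + \frac{k}{2} > 0$ at the origin, and the substitution $x = 2u$ giving $2^{m}\Gamma(m+\frac{k}{2})/\Gamma(\frac{k}{2})$ are all right, and your care with negative non-integer $m$ covers the paper's use of the lemma with $m = -\frac{1}{2}$, $k \ge 2$ in Proposition~\ref{prop:det-upper-bound}. The paper gives no proof of its own and only cites Janson's Equation (3.7); your direct Gamma-integral derivation is the standard argument behind that formula.
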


\begin{definition}
    For $p, n \in \bZ^+$ and a $p \times p$ positive semidefinite matrix $\VV$, the real Wishart distribution $W_p(\VV, n)$ is the distribution of the $p \times p$ random matrix
    \begin{align*}
        \XX = \GG\GG^\top,
    \end{align*}
    where $\GG = (\gg_1, \dots, \gg_n)$ is a $p \times n$ random matrix whose columns $\gg_i$ are independently and identically distributed according to $\calN(\zzero, \VV)$.
\end{definition}

We record the following fact about the distribution of determinants of real Wishart matrices.

\begin{lemma}[{\cite{goodman1963distribution}}] \label{lem:wishart-det}
    Let $n \ge p$. For $\XX \sim W_p(\II_p, n)$,
    \begin{align*}
        \det(\XX) \overset{d}{=} \prod_{i=0}^{p-1} x_i,
    \end{align*}
    where $x_i \sim \chi^2_{n-i}$ are independent.
\end{lemma}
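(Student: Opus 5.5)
The approach is to reduce the determinant to a product of squared Gram--Schmidt lengths and then use rotation invariance of the standard Gaussian.

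Write $\XX = \GG\GG^\top$, where $\GG$ is a $p \times n$ matrix with independent $\calN(0,1)$ entries. Apply the QR decomposition of Lemma~\ref{lem:gaussian-QR} to the $n \times p$ matrix $\GG^\top = \QQ\RR$. Since $n \ge p$, the columns of $\GG^\top$ are almost surely linearly independent, so $\RR$ is almost surely invertible with positive diagonal. Then $\XX = \RR^\top\QQ^\top\QQ\RR = \RR^\top\RR$, and hence
\[
\det(\XX) = \det(\RR)^2 = \prod_{j=1}^{p}\RR[j,j]^2 .
\]
It therefore suffices to show that the $\RR[j,j]^2$ are independent with $\RR[j,j]^2 \sim \chi^2_{n-j+1}$. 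Reindexing by $i = j-1$ then gives the claim.

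Let $\gg_1,\dots,\gg_p \in \bR^n$ be the columns of $\GG^\top$. By Gram--Schmidt, $\RR[j,j]$ is the Euclidean distance from $\gg_j$ to $S_{j-1} \colonequals \mathrm{span}(\gg_1,\dots,\gg_{j-1})$. Equivalently, $\RR[j,j] = \norm{\PP_{j-1}\gg_j}_2$, where $\PP_{j-1}$ is the orthogonal projection onto $S_{j-1}^\perp$, a subspace of dimension $n-j+1$ almost surely.

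Condition on $\gg_1,\dots,\gg_{j-1}$. Then $\gg_j$ is still a standard Gaussian vector in $\bR^n$, independent of the conditioning. By rotation invariance, its projection onto the fixed $(n-j+1)$-dimensional subspace $S_{j-1}^\perp$ is a standard Gaussian vector in that subspace. Hence, conditionally, $\RR[j,j]^2 \sim \chi^2_{n-j+1}$.

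This conditional law does not depend on $\gg_1,\dots,\gg_{j-1}$. In particular, it does not depend on $\RR[1,1],\dots,\RR[j-1,j-1]$, which are functions of those vectors. A standard induction on $j$, factoring the joint law as a product of these conditional laws, therefore shows that $\RR[1,1]^2,\dots,\RR[p,p]^2$ are mutually independent with the stated chi-squared distributions.

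The only delicate point is making the conditioning argument rigorous: the random subspace $S_{j-1}^\perp$ must be handled measurably, and the "conditional law independent of the past" step must be turned into joint independence. I would handle this by writing the joint density, or equivalently by computing $\Ex\left[\prod_j f_j(\RR[j,j]^2)\right]$ and integrating out $\gg_p,\gg_{p-1},\dots$ one at a time using Fubini. Each inner integral equals $\Ex[f_j(\chi^2_{n-j+1})]$ regardless of the earlier vectors.
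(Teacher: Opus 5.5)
Your proof is correct and complete. The paper gives no proof of this lemma; it only cites \cite{goodman1963distribution}. What you have written is the classical Bartlett-decomposition argument for the real case. You write $\XX = \RR^\top\RR$ with $\RR[j,j]$ the distance from $\gg_j$ to $\mathrm{span}(\gg_1,\dots,\gg_{j-1})$, and observe that each $\RR[j,j]^2$ is $\chi^2_{n-j+1}$ conditionally on the earlier columns. Your Fubini/tower argument then turns this into joint independence.

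Compared with the citation, your route is self-contained. It needs only the existence of the QR factorization, not the $\QQ$--$\RR$ independence in Lemma~\ref{lem:gaussian-QR}, although it is in the same spirit as the paper's use of that lemma in Proposition~\ref{prop:det-lower-bound}.

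The measurability point you flag is harmless. On the full-measure event that $\gg_1,\dots,\gg_{j-1}$ are linearly independent, set $\BB_{j-1} \colonequals [\gg_1,\dots,\gg_{j-1}]$. Then $\PP_{j-1} = \II_n - \BB_{j-1}(\BB_{j-1}^\top\BB_{j-1})^{-1}\BB_{j-1}^\top$ is a continuous function of those vectors, so $(\gg_1,\dots,\gg_j) \mapsto \norm{\PP_{j-1}\gg_j}_2^2$ is jointly measurable. For each fixed value of the earlier columns, the inner integral over $\gg_j$ equals $\Ex[f_j(\chi^2_{n-j+1})]$, as you say.

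Your argument also covers the edge case $n = p$. The paper needs this when it applies the lemma to $W_{k-1}(\II_{k-1}, k-1)$ in Proposition~\ref{prop:det-upper-bound}.
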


\subsection{Covariance Update Tools}

\begin{proposition}\label{prop:cov-lower-bound}
    For all $k$, $\SSigma^{(k)} \succeq \II$.
\end{proposition}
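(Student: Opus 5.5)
Proposition~\ref{prop:cov-lower-bound} will be proved by induction on $k$, using the covariance update of Proposition~\ref{pro:covariance_update}. For the base case, $\SSigma^{(1)} = \II_n$, since the columns of $\AA$ are independent standard Gaussian vectors, so $\SSigma^{(1)} \succeq \II$ holds trivially.

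For the inductive step, assume $\SSigma^{(k)} \succeq \II$ for a $d\times d$ matrix ($d = n-k+1$). Write
\[
\SSigma^{(k+1)} = \PPi_1 \PP_{1,i}\left(\II - \rr\ee_i^\top\right)\SSigma^{(k)}\left(\II - \ee_i\rr^\top\right)\PP_{1,i}^\top\PPi_1^\top .
\]
Congruence preserves the Loewner order, so
\[
\SSigma^{(k+1)} \succeq \PPi_1 \PP_{1,i}\left(\II - \rr\ee_i^\top\right)\left(\II - \ee_i\rr^\top\right)\PP_{1,i}^\top\PPi_1^\top .
\]
It therefore suffices to show that the right-hand side is $\succeq \II_{d-1}$.

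Set $\VV = \PPi_1\PP_{1,i}$. By Proposition~\ref{pro:PiPe}, $\VV\ee_i = \bvec{0}$. Hence
\[
\VV(\II - \rr\ee_i^\top) = \VV - (\VV\rr)\ee_i^\top ,
\]
and
\[
\VV(\II - \rr\ee_i^\top)(\II-\ee_i\rr^\top)\VV^\top = \VV\VV^\top - \VV\rr\ee_i^\top\VV^\top - \VV\ee_i\rr^\top\VV^\top + \VV\rr\,(\ee_i^\top\ee_i)\,\rr^\top\VV^\top .
\]
The two middle cross terms vanish because $\VV\ee_i = \bvec{0}$. This leaves $\VV\VV^\top + (\VV\rr)(\VV\rr)^\top$.

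Since $\VV$ is a permutation followed by deletion of one coordinate, it has orthonormal rows, so $\VV\VV^\top = \II_{d-1}$. The remaining term $(\VV\rr)(\VV\rr)^\top$ is a rank-one positive semidefinite matrix. Therefore $\SSigma^{(k+1)} \succeq \II_{d-1} + (\VV\rr)(\VV\rr)^\top \succeq \II$, which completes the induction.

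The only step requiring care is the cancellation of the cross terms, and Proposition~\ref{pro:PiPe} supplies exactly this. Note that the argument never uses the pivoting rule (that $|\rr[j]|\le 1$): the conclusion holds for any choice of $i$ and any $\rr$ with $\rr[i]=1$. I expect no real obstacle.
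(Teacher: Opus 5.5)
Your proof is correct and matches the paper's argument essentially line for line. Both induct from $\SSigma^{(1)} = \II_n$, replace $\SSigma^{(k)}$ by $\II$ via congruence, use Proposition~\ref{pro:PiPe} to kill the cross terms, and conclude from $\PPi_1\PP_{1,i}\PP_{1,i}^\top\PPi_1^\top = \II_{n-k}$ plus a rank-one positive semidefinite term.
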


\begin{proof}
    We prove this by induction.
    The base case is $k = 1$, for which we have $\SSigma^{(1)} = \II$.
    Assuming we've proved it for $k$, we establish it for $k+1$ by using the formula 
\begin{align*}
 \SSigma^{(k+1)} & = \PPi_{1} \PP_{1,i}  \left(\II_{n-k+1} - \rr\ee_{i}^\top\right)\SSigma^{(k)}\left(\II_{n-k+1} - \ee_{i}\rr^\top  \right) \PP_{1,i}^{\top} \PPi_1^{\top}  
 \\
 & \succeq \PPi_{1} \PP_{1,i}  \left(\II_{n-k+1} - \rr\ee_{i}^\top\right) \II_{n-k+1} \left(\II_{n-k+1} - \ee_{i}\rr^\top  \right) \PP_{1,i}^{\top} \PPi_1^{\top} \\
 & = \PPi_{1} \PP_{1,i} \PP_{1,i}^{\top} \PPi_1^{\top}
  + \PPi_{1} \PP_{1,i} \rr \rr^{\top} \PP_{1,i}^{\top} \PPi_1^{\top},\quad 
\text{as $\PPi_{1} \PP_{1,i} \ee_{i} = \bvec{0}$,}\\
& = \II_{n-k} + \PPi_{1} \PP_{1,i} \rr \rr^{\top} \PP_{1,i}^{\top} \PPi_1^{\top}
\\
& \succeq \II_{n-k},
\end{align*}
because $\PPi_{1} \PP_{1,i} \rr \rr^{\top} \PP_{1,i}^{\top} \PPi_1^{\top}$ is positive semidefinite.
\end{proof}

\begin{proposition}[Sufficient Condition for Covariance Growth]\label{prop:suff-cond-norm-increase}
    Fix $1 \le k \le n-1$. Let $d \colonequals n-k+1$ and $\SSigma^{(k)}$ be a $d \times d$ positive definite matrix. Let $\vv \in \bR^d$ be a top unit eigenvector of $\SSigma^{(k)}$. Let $\yy \ne \zzero \in \bR^{d}$ be any vector.
    \begin{itemize}
        \item Set $i = \argmax |\yy[j]|$.
        \item Set $\rr = \yy/\yy[i]$.
        \item Set $\SSigma^{(k+1)} = \PPi_{1} \PP_{1,i} \left(\II_{d} - \rr\ee_{i}^\top\right)\SSigma^{(k)}\left(\II_{d} - \ee_{i}\rr^\top\right) \PP_{1,i}^{\top} \PPi_1^{\top} $.
    \end{itemize}

    If for some $C \ge 1$, (1) $\vv[i]^2 \ge \frac{1}{d}$, (2) $\|\rr\|_2^2 \ge \frac{d}{C} \ge 2$, and (3) $\vv[i] \langle \vv[-i], \rr[-i] \rangle \le 0$, then
    \begin{align*}
        \|\SSigma^{(k+1)}\|_2 \ge \left(1 + \frac{1}{C} - \frac{2}{d}\right)\|\SSigma^{(k)}\|_2.
    \end{align*}

\end{proposition}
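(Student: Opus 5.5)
By Proposition~\ref{pro:remove_opers}, it suffices to lower bound the operator norm of $\SSigma' \colonequals \left(\II_d - \rr\ee_i^\top\right)\SSigma^{(k)}\left(\II_d - \ee_i\rr^\top\right)$. We will do this variationally, testing against a single well-chosen unit vector. Write $\lambda \colonequals \|\SSigma^{(k)}\|_2$, so $\SSigma^{(k)}\vv = \lambda\vv$. Note that $\rr[i] = 1$ and $|\rr[j]| \le 1$ for all $j$.

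\textbf{Step 1: choice of test vector.} Take
\[
\xx \colonequals \frac{\rr}{\|\rr\|_2}\,\mathrm{sign}(\vv[i]) \;-\; \vv[-i]\text{-part},
\]
and simplify it as follows. Since $\left(\II_d - \ee_i\rr^\top\right)\xx = \xx - (\rr^\top\xx)\ee_i$, a natural candidate is a combination $\xx = \alpha\vv + \beta\ee_i$ with $\left(\II_d - \ee_i\rr^\top\right)\xx$ close to $\vv$ scaled up. Cleaner still, use $\xx = \ee_i$ itself. Then
\[
\left(\II_d - \ee_i\rr^\top\right)\ee_i = \ee_i - \rr[i]\,\ee_i \cdot 0\ldots
\]
Computing honestly, $\rr^\top\ee_i = 1$, so $\left(\II_d - \ee_i\rr^\top\right)\ee_i = \ee_i - \ee_i = \zzero$, which is useless. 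So use the adjoint side instead. We have $\|\SSigma'\|_2 = \|(\SSigma^{(k)})^{1/2}\left(\II_d - \ee_i\rr^\top\right)\|_2^2$, which also equals $\|\left(\II_d - \rr\ee_i^\top\right)(\SSigma^{(k)})^{1/2}\|_2^2 \ge \lambda\,\|\left(\II_d - \rr\ee_i^\top\right)\vv\|_2^2$ (apply the operator to $\vv$, using $(\SSigma^{(k)})^{1/2}\vv = \sqrt{\lambda}\,\vv$). Thus the whole problem reduces to showing
\[
\|\vv - \vv[i]\,\rr\|_2^2 \ge 1 + \frac{1}{C} - \frac{2}{d}.
\]

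\textbf{Step 2: expansion.} Expand the square:
\[
\|\vv - \vv[i]\rr\|_2^2 = 1 - 2\vv[i]\langle \vv, \rr\rangle + \vv[i]^2\|\rr\|_2^2 .
\]
Splitting $\langle \vv, \rr\rangle = \vv[i] + \langle \vv[-i], \rr[-i]\rangle$ gives
\[
\|\vv - \vv[i]\rr\|_2^2 = 1 - 2\vv[i]^2 - 2\vv[i]\langle \vv[-i], \rr[-i]\rangle + \vv[i]^2\|\rr\|_2^2 .
\]
By condition (3), the cross term contributes $-2\vv[i]\langle \vv[-i], \rr[-i]\rangle \ge 0$. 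Hence the expression is at least $1 + \vv[i]^2\left(\|\rr\|_2^2 - 2\right)$. Since $\|\rr\|_2^2 \ge 2$ by (2), this is increasing in $\vv[i]^2$, so by (1) it is at least $1 + \frac{1}{d}\left(\frac{d}{C} - 2\right) = 1 + \frac{1}{C} - \frac{2}{d}$, as required.

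\textbf{Main obstacle.} The main obstacle is Step 1: choosing which side to test on, since testing on the right with natural vectors such as $\ee_i$ gives nothing. The rest is a short algebraic verification, in which the condition $\|\rr\|_2^2 \ge 2$ is exactly what makes the bound monotone in $\vv[i]^2$.
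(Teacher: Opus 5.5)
Your proposal is correct and follows essentially the paper's proof. Both reduce, via Proposition~\ref{pro:remove_opers}, to the bound $\lambda\|(\II_d - \rr\ee_i^\top)\vv\|_2^2$: the paper gets there from $\SSigma^{(k)} \succeq \lambda\vv\vv^\top$, and you get there from the equivalent square-root factorization. Both then carry out the identical expansion using conditions (1)--(3). The abandoned false starts at the beginning of Step 1 (the ill-defined ``$\vv[-i]$-part'' test vector and the $\ee_i$ computation) play no logical role and should be deleted from the write-up.
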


\begin{proof}
    Let $\lambda \colonequals \|\SSigma^{(k)}\|_2$. Since $\vv$ is a top eigenvector of $\SSigma^{(k)}$, we have $\SSigma^{(k)} \succeq \lambda \vv\vv^\top$. Then,
    \begin{align*}
        \|\SSigma^{(k+1)}\|_2 
        &= \left\| \PPi_{1} \PP_{1,i} \left(\II_{d} - \rr\ee_{i}^\top\right)\SSigma^{(k)}\left(\II_{d} - \ee_{i}\rr^\top\right) \PP_{1,i}^{\top} \PPi_1^{\top} \right\|_2\\
        & = \left\| \left(\II_{d} - \rr\ee_{i}^\top\right)\SSigma^{(k)}\left(\II_{d} - \ee_{i}\rr^\top\right)  \right\|_2 
        \qquad \text{(Proposition~\ref{pro:remove_opers})}\\
        &\ge \lambda \left\|\left(\II_{d} - \rr\ee_{i}^\top\right)\vv\vv^\top\left(\II_{d} - \ee_{i}\rr^\top\right)\right\|_2\\
        &= \lambda \left\|\left(\II_{d} - \rr\ee_{i}^\top\right)\vv\right\|_2^2\\
        &= \lambda \left(\|\vv\|_2^2 + \vv[i]^2 \|\rr\|_2^2 - 2\vv[i] \langle \vv, \rr \rangle\right)\\
        &= \lambda \left(\|\vv\|_2^2 + \vv[i]^2 (\|\rr\|_2^2 - 2) - 2\vv[i] \langle \vv[-i], \rr[-i] \rangle\right)\\
        &\ge \lambda \left(1 + \frac{1}{C} - \frac{2}{d}\right),
    \end{align*}
    where the last line follows from $\|\vv\|_2^2 = 1$, $\vv[i]^2 \ge \frac{1}{d}$, $\|\rr\|_2^2 \ge \frac{d}{C} \ge 2$, and $\vv[i] \langle \vv[-i], \rr[-i] \rangle \le 0$.

\end{proof}

The following lemmas and propositions provide bounds on the determinants of the covariance matrices.
Lemma~\ref{lem:cov-det} gives a formula for the determinant of the covariance in terms of the original matrix $\AA$.
Proposition~\ref{prop:det-upper-bound} tells us that the determinant of the covariance matrix is unlikely to grow too large, while Proposition~\ref{prop:det-lower-bound} says it is unlikely to become too small.
We will apply these to one matrix $\SS^{(0)}$ that appears near the end of the elimination process, when only around $\log(\rho/\sqrt{n})$ rows remain. After that, Proposition~\ref{prop:det-det-control} shows that the determinant of each subsequent covariance matrix can grow by at most a factor of the current dimension at each step. These bounds on the determinants will be useful for lower bounding the initial operator norm of the covariance matrix $\SS^{(0)}$ that we start with, and for lower bounding the probability that the operator norm that the covariance matrix grows by a constant factor stated in Proposition~\ref{prop:covariance-growth}.

\begin{lemma}[Covariance Determinant]\label{lem:cov-det}
    Fix $2 \le k \le n$. Consider the execution of GEPP on an $n \times n$ matrix $\AA$ with independent $\calN(0,1)$ entries after $k-1$ steps. Let $P \subseteq [n]$ be the set of $k-1$ row indices of $\AA$ chosen as pivots in the first $k-1$ steps by GEPP. Then, conditioning on the first $k-1$ columns of $\AA$, every column of the residual matrix $\MM^{(k)} = \AA^{(k)}[k{:}n, k{:}n]$ is independently distributed as $\calN(\zzero, \SSigma^{(k)})$, and
    \begin{align*}
        \det(\SSigma^{(k)}) &= \frac{\det\left(\AA[:,1{:}(k-1)]^\top \AA[:,1{:}(k-1)]\right)}{ \det\left(\AA[P,1{:}(k-1)]\right)^2}.
    \end{align*}
\end{lemma}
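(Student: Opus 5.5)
Write $\AA_1 \colonequals \AA[:,1{:}(k-1)]$ for the first $k-1$ columns, $\AA_2 \colonequals \AA[:,k{:}n]$ for the rest, and $\bar P \colonequals [n]\setminus P$. The plan is to describe the residual matrix $\MM^{(k)}$ explicitly as a Schur complement of the row-permuted $\AA$. Then the conditional covariance can be read off as a linear image of an identity covariance, and its determinant computed by a block-determinant identity.

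\textbf{Step 1 (explicit form of $\MM^{(k)}$).} Since GEPP performs row operations only, after $k-1$ steps the rows of $\MM^{(k)}$ are indexed by $\bar P$, in some order fixed by the pivots. We have
\[
\MM^{(k)} = \AA[\bar P, k{:}n] - \AA[\bar P,1{:}(k-1)]\,\AA[P,1{:}(k-1)]^{-1}\AA[P,k{:}n],
\]
which is the standard Schur complement description of block LU; here $\AA[P,1{:}(k-1)]$ has its rows in pivot order. Define the $(n-k+1)\times n$ matrix $\BB$ whose columns indexed by $\bar P$ form the identity and whose columns indexed by $P$ equal $-\AA[\bar P,1{:}(k-1)]\AA[P,1{:}(k-1)]^{-1}$. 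Then $\MM^{(k)} = \BB\AA_2$.

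\textbf{Step 2 (conditional distribution).} The pivot choices $P$ and the matrix $\BB$ depend only on $\AA_1$, and $\AA_2$ is independent of $\AA_1$ with i.i.d.\ $\calN(\zzero,\II_n)$ columns. So, conditioned on $\AA_1$, the columns of $\MM^{(k)}$ are independent $\calN(\zzero,\BB\BB^\top)$ vectors, and $\SSigma^{(k)} = \BB\BB^\top$. This is consistent with iterating Proposition~\ref{pro:covariance_update}.

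\textbf{Step 3 (determinant).} Let $\CC \colonequals \AA[\bar P,1{:}(k-1)]\AA[P,1{:}(k-1)]^{-1}$. Then $\BB\BB^\top = \II + \CC\CC^\top$, so by the matrix determinant lemma
\[
\det(\SSigma^{(k)}) = \det(\II_{k-1} + \CC^\top\CC).
\]
Next observe that
\[
\AA[P,1{:}(k-1)]^{-\top}\Bigl(\AA[P,1{:}(k-1)]^\top\AA[P,1{:}(k-1)] + \AA[\bar P,1{:}(k-1)]^\top\AA[\bar P,1{:}(k-1)]\Bigr)\AA[P,1{:}(k-1)]^{-1} = \II + \CC^\top\CC,
\]
and the middle factor is exactly $\AA_1^\top\AA_1$. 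Taking determinants yields the claimed formula. Equivalently, the $n\times n$ matrix with blocks $\begin{pmatrix}\AA[P,1{:}(k-1)] & \zzero \\ \AA[\bar P,1{:}(k-1)] & \II\end{pmatrix}$ can be used in a Cauchy–Binet-style computation.

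\textbf{Main obstacle.} The one point needing care is Step 1, in particular the claim that $\MM^{(k)}$ is exactly $\BB\AA_2$ (up to a row ordering, which does not affect the determinant) rather than some other combination. I would verify it by induction on $k$ using the update $\MM^{(k+1)} = \PPi_1\PP_{1,i}(\II-\rr\ee_i^\top)\MM^{(k)}\PPi_1^\top$. Alternatively, it follows from uniqueness: $\MM^{(k)}$ is the unique linear combination of the rows of $\AA$ that has identity coefficients on the $\bar P$ rows and zeros in the first $k-1$ columns. Almost-sure invertibility of $\AA[P,1{:}(k-1)]$ holds because the pivots are nonzero with probability one.
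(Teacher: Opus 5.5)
Your proposal is correct and follows essentially the same route as the paper. The paper also writes the residual as the Schur complement $\Tilde{\AA}[k{:}n,k{:}n]-\XX\,\Tilde{\AA}[1{:}(k-1),k{:}n]$, with $\XX$ the same coefficient matrix as yours, and reads off the conditional covariance $\II_{n-k+1}+\XX\XX^\top$. It then applies Sylvester's identity together with $\II_{k-1}+\XX^\top\XX=\YY^\top\YY$, where $\YY=\Tilde{\AA}[:,1{:}(k-1)]\,\Tilde{\AA}[1{:}(k-1),1{:}(k-1)]^{-1}$, which is exactly your Step 3; your uniqueness justification of Step 1 and your almost-sure invertibility remark fill in details the paper leaves implicit.
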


\begin{proof}
    Consider the execution of GEPP on $\AA$ after $k-1$ steps. Let $\Tilde{\AA}$ denote the matrix obtained from $\AA$ by applying the row swaps $\PP^{(1)}, \PP^{(2)}, \dots,$ $\PP^{(k-2)}, \PP^{(k-1)}$, i.e.,
    \begin{align*}
        \Tilde{\AA} \colonequals \PP^{(k-1)}\PP^{(k-2)}\dots \PP^{(2)}\PP^{(1)} \AA.
    \end{align*}
    Then, for $k \le i \le n$, we have
    \begin{align*}
        \AA^{(k)}[k{:}n, i] = \Tilde{\AA}[k{:}n,i] - \XX\Tilde{\AA}[1{:}(k-1),i],
    \end{align*}
    where $\XX \colonequals \Tilde{\AA}[k{:}n,1{:}(k-1)]\Tilde{\AA}[1{:}(k-1),1{:}(k-1)]^{-1}$. Denote $d \colonequals n - k + 1$. Since $\AA$ contains independent $\calN(0,1)$ entries, conditioning on the first $k-1$ columns of $\AA$, every column of $\MM^{(k)} = \AA^{(k)}[k{:}n,k{:}n]$ is independently distributed as
    \begin{align*}
        \calN\left(\zzero, \II_{d} + \XX\XX^\top\right).
    \end{align*}
    
    By Sylvester's identity,
    \begin{align*}
        \det(\II_d + \XX\XX^\top) &= \det\left(\II_{k-1} + \XX^\top \XX\right)\\
        &= \det\left(\YY^\top \YY\right)
    \intertext{where $\YY \colonequals \Tilde{\AA}[:,1{:}(k-1)]\Tilde{\AA}[1{:}(k-1),1{:}(k-1)]^{-1}$,}
        &= \frac{\det\left(\Tilde{\AA}[:,1{:}(k-1)]^\top \Tilde{\AA}[:,1{:}(k-1)]\right)}{\det\left(\Tilde{\AA}[1{:}(k-1),1{:}(k-1)]\right)^2}\\
        &= \frac{\det\left(\AA[:,1{:}(k-1)]^\top \AA[:,1{:}(k-1)]\right)}{ \det\left(\AA[P,1{:}(k-1)]\right)^2},
    \end{align*}
    where in the last step we used that $P \subseteq [n]$ is the set of $k-1$ row indices of $\AA$ chosen as pivots in the first $k-1$ steps, corresponding to the first $k-1$ rows of $\Tilde{\AA}$.
\end{proof}

We will apply Propositions~\ref{prop:det-upper-bound} and \ref{prop:det-lower-bound} with $d \approx \log(\rho / \sqrt{n})$.
We state them for general $d$ to facilitate their eventual sharpening.

\begin{proposition}[Determinant Upper Bound] \label{prop:det-upper-bound}
    Fix $2\le k \le n$. Let $d \colonequals n-k+1$, and $\SSigma^{(k)}$ be obtained from the covariance update process started at $\SSigma^{(1)} = \II_n$. Then, for every $t > 0$,
    \begin{align*}
        \Pr\left(\det(\SSigma^{(k)}) \ge t \right) \le \frac{4n^{d+1}}{t^{\frac{1}{3}}}
    \end{align*}
In particular,
    \begin{align*}
        \Pr\left(\det(\SSigma^{(k)}) \ge 512n^{3d+3} \right) \le \frac{1}{2}.
    \end{align*}
\end{proposition}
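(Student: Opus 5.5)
Let $p = k-1$, so that $\AA[:,1{:}p]$ is an $n \times p$ Gaussian matrix. By Lemma~\ref{lem:cov-det},
\[
\det(\SSigma^{(k)}) = \frac{\det(\GG^\top\GG)}{\det(\AA[P,1{:}p])^2}, \qquad \GG \colonequals \AA[:,1{:}p].
\]
The plan is to apply Markov's inequality to $\det(\SSigma^{(k)})^{1/3}$, so everything reduces to showing $\Ex\bigl[\det(\SSigma^{(k)})^{1/3}\bigr] \le 4n^{d+1}$. The difficulty is that $P$ is chosen by the algorithm and depends on $\GG$. We remove this dependence with a union bound over all row sets $S$ of size $p$. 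Since $\det(\SSigma^{(k)}) \ge 1$ by Proposition~\ref{prop:cov-lower-bound}, we have $\det(\AA[P,1{:}p])^2 \le \det(\GG^\top \GG)$. A union bound over the $\binom{n}{p} = \binom{n}{d-1} \le n^{d-1}$ choices of $S$ then gives
\[
\Ex\bigl[\det(\SSigma^{(k)})^{1/3}\bigr] \le \sum_{|S|=p} \Ex\Bigl[\bigl(\det(\GG^\top\GG)/\det(\GG[S,:])^2\bigr)^{1/3}\Bigr].
\]

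Each term is handled with Lemma~\ref{lem:gaussian-QR}. Write $\GG = \QQ\RR$, so $\det(\GG^\top\GG) = \det(\RR)^2$ and $\det(\GG[S,:]) = \det(\QQ[S,:])\det(\RR)$. The $\RR$ factors cancel, and each term becomes
\[
\Ex\bigl[|\det(\QQ[S,:])|^{-2/3}\bigr],
\]
where $\QQ$ is uniform on the Stiefel manifold. The task is then to bound a negative moment of a $p \times p$ minor of a random orthonormal frame.

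To compute it, compare $\QQ[S,:]$ with a Gaussian. Write $\GG[S,:] = \QQ[S,:]\RR$ with $\QQ$ independent of $\RR$. Then
\[
\Ex\bigl[|\det \GG[S,:]|^{-2/3}\bigr] = \Ex\bigl[|\det \QQ[S,:]|^{-2/3}\bigr]\cdot \Ex\bigl[\det(\RR)^{-2/3}\bigr].
\]
Here $\GG[S,:]$ is a square $p\times p$ Gaussian matrix, so $\det(\GG[S,:])^2 \sim \prod_{i=1}^{p}\chi^2_i$ (apply Lemma~\ref{lem:wishart-det} with $n=p$), while $\det(\RR)^2 \sim \prod_{i=0}^{p-1}\chi^2_{n-i}$. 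Lemma~\ref{lem:chi-squared-moments} with $m = -1/3$ (valid since $k/2 > 1/3$ for every $k \ge 1$) evaluates both factors. The ratio is
\[
\prod_{i=0}^{p-1}\frac{\Gamma(\tfrac{p-i}{2}-\tfrac13)\,\Gamma(\tfrac{n-i}{2})}{\Gamma(\tfrac{p-i}{2})\,\Gamma(\tfrac{n-i}{2}-\tfrac13)}.
\]
This is a telescoping-type product of Gamma ratios. Using $\Gamma(x+\tfrac13)/\Gamma(x) \le x^{1/3}$ and the matching lower bound, together with $\Gamma(a-\tfrac13)/\Gamma(a)$ being bounded by a constant times $a^{-1/3}$ (large only for $a = 1/2$), the product should be at most a constant times
\[
\prod_i \Bigl(\frac{n-i}{p-i}\Bigr)^{1/3}.
\]

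The main obstacle is this final product: naively it is $\bigl(\binom{n}{p}\bigr)^{1/3}$-like, roughly $(n/d)^{d/3}$, and we must make sure it combines with the $n^{d-1}$ from the union bound into at most $4n^{d+1}$. Since $\binom{n}{p}^{1/3} \le n^{(d-1)/3}$, this seems comfortable. The constant $4$ and the extra power of $n$ should absorb the small-argument Gamma ratios (e.g.\ $\Gamma(1/6)/\Gamma(1/2)$) and the factor $\Gamma(n/2)/\Gamma(n/2-1/3)\le n^{1/3}$-type contributions. Markov's inequality then gives
\[
\Pr(\det \SSigma^{(k)} \ge t) \le 4n^{d+1}t^{-1/3}.
\]
Setting $t = 512 n^{3d+3}$ gives $t^{1/3} = 8n^{d+1}$, so the bound becomes $4/8 = 1/2$.
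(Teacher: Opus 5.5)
There is a genuine gap in the final combination step, and it is not a matter of constants.

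First, a counting slip: since $p+d=n$, the union bound runs over $\binom{n}{p}=\binom{n}{d}$ sets, not $\binom{n}{d-1}$.

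More importantly, each summand is genuinely of order $\binom{n}{d}^{1/3}$. The rows of $\QQ$ are exchangeable, and Cauchy--Binet gives $\sum_{|S|=p}\det(\QQ[S,:])^2=\det(\QQ^\top\QQ)=1$. Hence $\Ex[\det(\QQ[S,:])^2]=\binom{n}{d}^{-1}$, and Jensen's inequality for the convex map $x\mapsto x^{-1/3}$ yields $\Ex[|\det\QQ[S,:]|^{-2/3}]\ge\binom{n}{d}^{1/3}$. This is consistent with your Gamma product, which equals $\binom{n}{d}^{1/3}$ up to polynomial factors. So the best your route can deliver is
\[
\Pr\left(\det(\SSigma^{(k)})\ge t\right)\le t^{-1/3}\sum_{|S|=p}\Ex\left[|\det\QQ[S,:]|^{-2/3}\right],\qquad \sum_{|S|=p}\Ex\left[|\det\QQ[S,:]|^{-2/3}\right]\ge\binom{n}{d}^{4/3}.
\]
Now $\binom{n}{d}^{4/3}\approx n^{4d/3}/(d!)^{4/3}$ exceeds $4n^{d+1}$ for every fixed $d\ge 4$ once $n$ is large. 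At $t=512n^{3d+3}$ your bound then exceeds $1$, so even the ``in particular'' claim is lost. Your remark that this ``seems comfortable'' is an arithmetic error: even with your count, $n^{d-1}\cdot n^{(d-1)/3}=n^{4(d-1)/3}$, which is larger than $n^{d+1}$ as soon as $d>7$. This is exactly the regime the paper needs, since the proposition is later applied in dimension above $100$ with $n$ arbitrarily large.

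The missing idea is to stop union-bounding the whole ratio. Separate the numerator, which does not depend on $P$, from the denominator, which does. Write
\[
\Pr(\det\SSigma^{(k)}\ge t)\le\Pr(\det(\GG^\top\GG)\ge tz)+\Pr(\det(\AA[P,1{:}p])^2\le z).
\]
Bound the first term by Markov with the first moment $\Ex[\det(\GG^\top\GG)]=n!/d!$. Apply the union bound only to the small-ball event. For fixed $J$ we have $\det(\AA[J,1{:}p])^2\sim\prod_{i=1}^p\chi^2_i$; conditioning on all factors except the $\chi^2_1$ one, Lemma~\ref{lem:gaussian-small-ball} and Lemma~\ref{lem:chi-squared-moments} give $\Pr(\det(\AA[J,1{:}p])^2\le z)\le\sqrt{8p^2z/p!}$.

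Balancing $\frac{n!}{d!\,tz}$ against $\binom{n}{d}\sqrt{8p^2z/p!}$ gives $4p^{2/3}\binom{n}{d}t^{-1/3}\le 4n^{d+1}t^{-1/3}$. The gain comes from two places once $z$ is optimized. The union-bound factor enters only to the power $2/3$. Meanwhile $n!/d!$ combines with the $1/p!$ from the small-ball estimate into $\binom{n}{d}^{1/3}$. The total is therefore of order $\binom{n}{d}$ instead of your $\binom{n}{d}^{4/3}$.
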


\begin{proof}
    Consider the execution of GEPP on an $n \times n$ matrix $\AA$ with independent $\calN(0,1)$ entries that correspond to the covariance update process. By Lemma~\ref{lem:cov-det},
    \begin{align*}
        \det(\SSigma^{(k)}) &= \frac{\det\left(\AA[:,1{:}(k-1)]^\top \AA[:,1{:}(k-1)]\right)}{ \det\left(\AA[P,1{:}(k-1)]\right)^2},
    \end{align*}
    where $P \subseteq [n]$ is the set of $k-1$ row indices of $\AA$ chosen as pivots in the first $k-1$ steps. Now, we bound the denominator and the numerator separately.

    Consider a fixed $J \subseteq [n]$ of size $k-1$. Note that $\AA[:,1{:}(k-1)]^\top \AA[:,1{:}(k-1)]$ and $\AA[J,1{:}(k-1)]^\top\AA[J,1{:}(k-1)]$ are real Wishart matrices distributed as $W_{k-1}(\II_{k-1}, n)$ and $W_{k-1}(\II_{k-1}, k-1)$ respectively. By Lemma~\ref{lem:wishart-det},
    \begin{align*}
        \det\left(\AA[:,1{:}(k-1)]^\top \AA[:,1{:}(k-1)]\right) &\sim \prod_{i=0}^{k-2} \chi_{n-i}^2,\\
        \det(\AA[J,1{:}(k-1)])^2 &\sim \prod_{i=1}^{k-1} \chi_i^2.
    \end{align*}
    By Markov's inequality, for any $z > 0$,
    \begin{align*}
        &\Pr\left(\det\left(\AA[:,1{:}(k-1)]^\top \AA[:,1{:}(k-1)]\right) \ge z\right)\le \frac{1}{z}\prod_{i=0}^{k-2}\underset{x_i \sim \chi^2_{n-i}}{\Ex}[x_i] = \frac{1}{z} \prod_{i=0}^{k-2} (n-i) = \frac{1}{z} \cdot\frac{n!}{d!}. \numberthis \label{ineq:det-numerator-bound}
    \end{align*}
    On the other hand, by a union bound over all $J \subseteq [n]$ of size $k-1$, for any $z > 0$,
    \begin{align*}
        &\Pr\left(\det\left(\AA[P,1{:}(k-1)]\right)^2 \le z\right)\\
        &\le \Pr\left(\exists\,J \subseteq [n], |J| = k-1, \text{ such that } \det\left(\AA[J,1{:}(k-1)]\right)^2 \le z\right)\\
        &\le \binom{n}{k-1} \Pr_{\substack{x_i \sim \chi^2_{i} \text{ independently},\\ \text{for } 1\le i \le k-1}}\left(\prod_{i=1}^{k-1} x_i \le z\right)\\
        &= \binom{n}{d} \underset{\substack{x_i \sim \chi^2_{i} \text{ independently},\\ \text{for } 2\le i \le k-1}}{\Ex}\left[\Pr_{x_1 \sim \chi^2_1}\left(x_1 \le \frac{z}{\prod_{i=2}^{k-1} x_i}\right)\right]\\
        &= \binom{n}{d} \underset{\substack{x_i \sim \chi^2_{i} \text{ independently},\\ \text{for } 2\le i \le k-1}}{\Ex}\left[\Pr_{y \sim \calN(0,1)}\left(|y| \le \sqrt{\frac{z}{\prod_{i=2}^{k-1} x_i}}\right) \right]\\
        &\le \binom{n}{d} \underset{\substack{x_i \sim \chi^2_{i} \text{ independently},\\ \text{for } 2\le i \le k-1}}{\Ex}\left[\sqrt{\frac{2}{\pi}}\sqrt{\frac{z}{\prod_{i=2}^{k-1} x_i}}  \right] \qquad \qquad \text{(Lemma~\ref{lem:gaussian-small-ball})}\\
        &= \binom{n}{d} \sqrt{\frac{2z}{\pi}} \prod_{i=2}^{k-1} \left(2^{-\frac{1}{2}} \frac{\Gamma(\frac{i-1}{2})}{\Gamma(\frac{i}{2})}\right) \hspace{1.37in} \text{(Lemma~\ref{lem:chi-squared-moments})}\\
        &= \binom{n}{d} \sqrt{\frac{z}{2^{k-3}\pi}} \frac{\Gamma(\frac{1}{2})}{\Gamma(\frac{k-1}{2})}.
    \end{align*}
    Since $\Gamma(\frac{1}{2}) = \sqrt{\pi}$ and
    \begin{align*}
        \Gamma\left(\frac{k-1}{2}\right) &= \begin{cases}
            \Gamma(\frac{1}{2}) \frac{(k-3)!!}{2^{((k-2)/2)}}  & \quad k \text{ even},\\
            \Gamma(1) \left(\frac{k-3}{2}\right)!  & \quad k \text{ odd},
        \end{cases} \qquad \text{(using } \Gamma(x+1)=x\Gamma(x)\text{ for } x > 0\text{)}\\
        &\ge \frac{\sqrt{(k-1)!}}{(k-1)2^{\frac{k}{2}}},
    \end{align*}
    we get
    \begin{align*}
        \Pr\left(\det\left(\AA[P,1{:}(k-1)]\right)^2 \le z\right) &\le \binom{n}{d} \sqrt{\frac{z}{2^{k-3}\pi}} \frac{\Gamma(\frac{1}{2})}{\Gamma(\frac{k-1}{2})} \le \binom{n}{d} \sqrt{\frac{8(k-1)^2z}{(k-1)!}}. \numberthis \label{ineq:det-denominator-bound}
    \end{align*}

    Combining \eqref{ineq:det-numerator-bound} and \eqref{ineq:det-denominator-bound}, we have for every $t > 0$ and $z > 0$,
    \begin{align*}
        &\Pr\left(\det(\SSigma^{(k)}) \ge t\right)\\
        &\le \Pr\left(\det\left(\AA[:,1{:}(k-1)]^\top \AA[:,1{:}(k-1)]\right) \ge tz \text{ or } \det\left(\AA[P,1{:}(k-1)]\right)^2 \le z \right)\\
        &\le \frac{1}{tz} \cdot \frac{n!}{d!} + \binom{n}{d} \sqrt{\frac{8(k-1)^2z}{(k-1)!}}.
    \end{align*}
    Setting $z = \frac{1}{t^{\frac{2}{3}}}\cdot \frac{(k-1)!}{2(k-1)^{\frac{2}{3}}}$, we get
    \begin{align*}
        \Pr\left(\det(\SSigma^{(k)}) \ge t\right) &\le \frac{4(k-1)^{\frac{2}{3}}}{t^{\frac{1}{3}}} \binom{n}{d} \le \frac{4n^{d+1}}{t^{\frac{1}{3}}}.
    \end{align*}
\end{proof}

\begin{proposition}[Determinant Lower Bound]\label{prop:det-lower-bound}
    Fix $2 \le k \le n$. Let $d \colonequals n - k + 1$, and $\SSigma^{(k)}$ be obtained from the covariance update process started at $\SSigma^{(1)} = \II_n$. Then, for every $t > 0$,
    \begin{align*}
        \Pr\left(\det(\SSigma^{(k)}) \le t \right) \le \left(\frac{t(d+2\log(n))^{d}}{n^{d(1- \frac{1}{\lceil \log(n)\rceil})}}\right)^{\lceil \log(n)\rceil}.
    \end{align*}
In particular,
    \begin{align*}
        \Pr\left(\det(\SSigma^{(k)}) \le \frac{1}{e}\left(\frac{n}{e(d + 2\log(n))}\right)^d \right) \le \frac{1}{n}.
    \end{align*}  
\end{proposition}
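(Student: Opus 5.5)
The plan is to control $\det(\SSigma^{(k)})^{-1}$ through its $m$-th moment with $m \colonequals \lceil \log(n)\rceil$, and then apply Markov's inequality:
\[
\Pr\left(\det(\SSigma^{(k)}) \le t\right) = \Pr\left(\det(\SSigma^{(k)})^{-m} \ge t^{-m}\right) \le t^m\, \Ex\left[\det(\SSigma^{(k)})^{-m}\right].
\]

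\textbf{Reduction to the Q factor.} By Lemma~\ref{lem:cov-det},
\[
\det(\SSigma^{(k)})^{-1} = \frac{\det(\AA[P,1{:}(k-1)])^2}{\det(\AA[:,1{:}(k-1)]^\top \AA[:,1{:}(k-1)])}.
\]
Write the QR decomposition $\AA[:,1{:}(k-1)] = \QQ\RR$. Then $\det(\AA[J,1{:}(k-1)])^2 = \det(\QQ[J,:])^2\det(\RR)^2$ for any row set $J$, and $\det(\AA[:,1{:}(k-1)]^\top\AA[:,1{:}(k-1)]) = \det(\RR)^2$. Hence
\[
\det(\SSigma^{(k)})^{-1} = \det(\QQ[P,:])^2.
\]
The pivot set $P$ is random and depends on $\AA$. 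To decouple it, I bound by the maximum over all $J$ of size $k-1$ and use a union bound:
\[
\Ex\left[\det(\QQ[P,:])^{2m}\right] \le \sum_{|J| = k-1} \Ex\left[\det(\QQ[J,:])^{2m}\right] = \binom{n}{d}\, \Ex\left[\det(\QQ[J_0,:])^{2m}\right]
\]
for a fixed $J_0$.

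\textbf{Computing the moment via independence.} By Lemma~\ref{lem:gaussian-QR}, $\QQ$ and $\RR$ are independent. Therefore
\[
\Ex\left[\det(\AA[J_0,1{:}(k-1)])^{2m}\right] = \Ex\left[\det(\QQ[J_0,:])^{2m}\right]\cdot\Ex\left[\det(\RR)^{2m}\right],
\]
which gives
\[
\Ex\left[\det(\QQ[J_0,:])^{2m}\right] = \frac{\Ex\left[\det(\AA[J_0,1{:}(k-1)])^{2m}\right]}{\Ex\left[\det(\AA[:,1{:}(k-1)]^\top\AA[:,1{:}(k-1)])^{m}\right]}.
\]
By Lemma~\ref{lem:wishart-det}, the numerator is $\prod_{i=1}^{k-1}\Ex[\chi^{2m}_i]$ and the denominator is $\prod_{j=d+1}^{n}\Ex[\chi^{2m}_j]$, where $\chi^2_i$ denotes an independent $\chi^2_i$ variable. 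By Lemma~\ref{lem:chi-squared-moments}, each factor equals $2^m f(i)$ with $f(x) = \Gamma(m+x/2)/\Gamma(x/2)$, so the powers of $2$ cancel. The common indices $d+1,\dots,k-1$ also cancel, leaving
\[
\frac{\prod_{i=1}^{d} f(i)}{\prod_{j=k}^{n} f(j)}.
\]

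\textbf{Elementary Gamma bounds.} For the numerator, use $f(i) \le (m + i/2)^m$. For the denominator, use $f(j) \ge (j/2)^m$, with $j \ge k = n-d+1$. This gives the bound
\[
\left(\frac{2m+d}{n-d+1}\right)^{dm}.
\]
With $m = \lceil\log n\rceil$, one has $2m+d \le d + 2\log(n) + 2$. Comparing with the claimed $(d+2\log(n))^d/n^d$ requires absorbing the $+2$ and the $n$ versus $n-d+1$ discrepancy.

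\textbf{Assembling the bound.} Combining the pieces,
\[
\Pr\left(\det(\SSigma^{(k)}) \le t\right) \le \binom{n}{d}\left(\frac{t\,(2m+d)^d}{(n-d+1)^d}\right)^m.
\]
Then write $\binom{n}{d} \le n^d = \left(n^{d/m}\right)^m$ and fold it into the $m$-th power. This turns $n^d$ in the denominator into $n^{d(1-1/m)}$, matching the stated form. The ``in particular'' statement follows by plugging in $t = \frac{1}{e}\left(\frac{n}{e(d+2\log n)}\right)^d$. The bracket then becomes roughly $e^{-1-d} n^{d/m}$. Since $n^{d/m} \le e^d$, it is at most $e^{-1}$, and its $m$-th power is at most $n^{-1}$.

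\textbf{Main obstacle.} The step I expect to be delicate is the Gamma-ratio bookkeeping that matches the exact constants. This means turning $(2m+d)^d/(n-d+1)^d$ into precisely $(d+2\log n)^d/n^d$, perhaps by a sharper bound such as $f(i) \le ((i+2m-1)/2)^m$ or by pairing factors $i$ with $n-d+i$. I would first try the pairing of factors $i$ with $n-d+i$.
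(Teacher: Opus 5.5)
Your architecture is the paper's: Markov's inequality on the $m$-th inverse moment with $m=\lceil\log n\rceil$, the QR-independence identity reducing $\Ex[\det(\QQ[J,:])^{2m}]$ to a ratio of Wishart-determinant moments, and a union bound over the $\binom{n}{d}$ row sets. You take the union bound on expectations, the paper on probabilities, which makes no difference. The gap is in the step you flag, and it is more than constant bookkeeping. The bound you actually derive, $\left(\frac{2m+d}{n-d+1}\right)^{dm}$, does not imply the statement, so your ``Assembling the bound'' paragraph is wrong to say that folding $\binom{n}{d}\le n^d$ into the $m$-th power ``matches the stated form.'' You bound every numerator factor by its largest value and every denominator factor by its smallest value, but these extremes are not attained together. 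The loss from $(n-d+1)^{dm}$ versus $n^{dm}$ is exponential when $d$ is comparable to $n$; for $k=2$ it is $2^{(n-1)m}$ versus $n^{(n-1)m}$. Even apart from that, when $d\gg\log n$, replacing $d+2\log n$ by $d+2m\le d+2\log n+2$ costs a factor of about $e^{2m}\ge n^2$, which would make the ``in particular'' bound of $1/n$ vacuous.

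The missing idea is the pairing you propose at the end, carried out with the exact product form of the $\chi^2$ moments rather than Gamma-function estimates. Using $2^m\Gamma(m+x/2)/\Gamma(x/2)=\prod_{l=0}^{m-1}(x+2l)$, your ratio equals
\[
\prod_{l=0}^{m-1}\prod_{i=1}^{d}\frac{i+2l}{n-d+i+2l}.
\]
This identity holds whether or not $d+1\le k-1$, even though your ``common indices cancel'' description is literally accurate only in that case. Since $x\mapsto x/(c+x)$ is increasing for $c=n-d=k-1>0$, each factor is at most its value at $i=d$, namely $\frac{d+2l}{n+2l}\le\frac{d+2(m-1)}{n}$. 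Hence $\Ex[\det(\QQ[J,:])^{2m}]\le\left(\frac{d+2(m-1)}{n}\right)^{dm}$, and $2(m-1)\le 2\log n$ because $\lceil\log n\rceil-1<\log n$. With this bound in place, the rest of your argument goes through exactly as in the paper: the union bound with $\binom{n}{d}\le n^d$, and the evaluation at $t=\frac{1}{e}\left(\frac{n}{e(d+2\log n)}\right)^d$ using $n^{d/m}\le e^d$.
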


\begin{proof}
    Consider the execution of GEPP on an $n \times n$ matrix $\AA$ with independent $\calN(0,1)$ entries that correspond to the covariance update process. By Lemma~\ref{lem:cov-det},
    \begin{align*}
        \det(\SSigma^{(k)}) &= \frac{\det\left(\AA[:,1{:}(k-1)]^\top \AA[:,1{:}(k-1)]\right)}{ \det\left(\AA[P,1{:}(k-1)]\right)^2},
    \end{align*}
    where $P \subseteq [n]$ is the set of $k-1$ row indices of $\AA$ chosen as pivots in the first $k-1$ steps.

    Now consider a fixed $J \subseteq [n]$ of size $k-1$. Write the QR decomposition of $\AA[:,1{:}(k-1)]$ as
    \begin{align*}
        \AA[:,1{:}(k-1)] = \QQ \RR,
    \end{align*}
    where $\QQ^\top \QQ = \II_{k-1}$ and $\RR$ is upper triangular with positive diagonal entries. Note that $\det(\AA[:,1{:}(k-1)]^\top \AA[:,1{:}(k-1)]) = \det(\RR)^2$, and $\AA[J, 1{:}(k-1)] = \QQ[J,:]\RR$ for every fixed $J \subseteq [n]$ of size $k-1$. Since $\QQ$ and $\RR$ are independent by Lemma~\ref{lem:gaussian-QR},
    \begin{align*}
        \frac{\det\left(\AA[:,1{:}(k-1)]^\top \AA[:,1{:}(k-1)]\right)}{ \det\left(\AA[J,1{:}(k-1)]\right)^2} &= \frac{\det(\RR)^2}{\det(\QQ[J,:])^2 \det(\RR)^2} = \frac{1}{\det(\QQ[J,:])^2}.
    \end{align*}
    is independent of $\det(\AA[:,1{:}(k-1)]^\top \AA[:,1{:}(k-1)]) = \det(\RR)^2$. Therefore, for every fixed $J \subseteq [n]$ of size $k-1$ and every $m \in \bZ^+$, we have
    \begin{align*}
        &\Ex\left[\det(\AA[:,1{:}(k-1)]^\top \AA[:,1{:}(k-1)])^m\right] \Ex\left[\left(\frac{\det\left(\AA[:,1{:}(k-1)]^\top \AA[:,1{:}(k-1)]\right)}{ \det\left(\AA[J,1{:}(k-1)]\right)^2}\right)^{-m}\right]\\
        &= \Ex\left[\det\left(\AA[J,1{:}(k-1)]\right)^{2m}\right],
    \end{align*}
    and
    \begin{align*}
        &\Ex\left[\left(\frac{\det\left(\AA[:,1{:}(k-1)]^\top \AA[:,1{:}(k-1)]\right)}{ \det\left(\AA[J,1{:}(k-1)]\right)^2}\right)^{-m}\right]\\
        &= \frac{\Ex\left[\det\left(\AA[J,1{:}(k-1)]^\top\AA[J,1{:}(k-1)]\right)^{m}\right]}{\Ex\left[\det(\AA[:,1{:}(k-1)]^\top \AA[:,1{:}(k-1)])^m\right]}
        \intertext{Since $\AA[:,1{:}(k-1)]^\top \AA[:,1{:}(k-1)]$ and $\AA[J,1{:}(k-1)]^\top\AA[J,1{:}(k-1)]$ are real Wishart matrices distributed as $W_{k-1}(\II_{k-1}, n)$ and $W_{k-1}(\II_{k-1}, k-1)$ respectively, by Lemma~\ref{lem:wishart-det} we have}
        &= \prod_{i=0}^{k-2}\frac{\Ex_{x_i \sim \chi_{k-1-i}^2}[x_i^m]}{\Ex_{y_i \sim \chi_{n-i}^2}[y_i^m]}
        \intertext{By Lemma~\ref{lem:chi-squared-moments}, $\Ex_{x \sim \chi_k^2}[x^m] = 2^m \frac{\Gamma(m+k/2)}{\Gamma(k/2)} = \prod_{j=0}^{m-1} (k + 2j)$ for $m \in \bZ^+$. Thus,}
        &= \prod_{i=0}^{k-2} \prod_{j=0}^{m-1}\frac{k-1-i+2j}{n-i+2j}\\
        &= \prod_{j=0}^{m-1} \prod_{i=1}^d \frac{i + 2j}{n-d+i+2j} \hspace{1.3in} \text{(}d = n-k+1\text{)}\\
        &\le \prod_{j=0}^{m-1} \left(\frac{d+2j}{n+2j}\right)^d \le \left(\frac{d+2m-2}{n}\right)^{dm}.
    \end{align*}
    By Markov's inequalty, for every fixed $J \subseteq [n]$ of size $k-1$,
    \begin{align*}
        &\Pr\left(\frac{\det\left(\AA[:,1{:}(k-1)]^\top \AA[:,1{:}(k-1)]\right)}{ \det\left(\AA[J,1{:}(k-1)]\right)^2} \le t\right)\\
        &\le t^m \Ex\left[\left(\frac{\det\left(\AA[:,1{:}(k-1)]^\top \AA[:,1{:}(k-1)]\right)}{ \det\left(\AA[J,1{:}(k-1)]\right)^2}\right)^{-m}\right]\\
        &\le t^m \left(\frac{d+2m-2}{n}\right)^{dm}.
    \end{align*}
    By a union bound over all $J \subseteq [n]$ of size $k-1$, for every $t > 0$ we get
    \begin{align*}
        \Pr\left(\det(\SSigma^{(k)}) \le t\right) &= \Pr\left(\frac{\det\left(\AA[:,1{:}(k-1)]^\top \AA[:,1{:}(k-1)]\right)}{ \det\left(\AA[P,1{:}(k-1)]\right)^2} \le t\right)\\
        &\le \binom{n}{k-1} t^m \left(\frac{d+2m-2}{n}\right)^{dm}\\
        &\le \left(\frac{t(d+2(m-1))^{d}}{n^{d(1- \frac{1}{m})}}\right)^m.
    \end{align*}
    Taking $m = \lceil \log n \rceil$ produces the desired inequality.
\end{proof}

\begin{proposition}[Deterministic Determinant Control] \label{prop:det-det-control}
    Fix $1\le k \le n-1$. Let $d \colonequals n-k+1$ and $\SSigma^{(k)}$ be a $d \times d$ positive definite matrix. Let $\yy \ne \zzero \in \bR^{d}$ be any vector.
    \begin{itemize}
        \item Set $i = \argmax |\yy[j]|$.
        \item Set $\rr = \yy/\yy[i]$.
        \item Set $\SSigma^{(k+1)} = \PPi_{1} \PP_{1,i} \left(\II_{d} - \rr\ee_{i}^\top\right)\SSigma^{(k)}\left(\II_{d} - \ee_{i}\rr^\top\right) \PP_{1,i}^{\top} \PPi_1^{\top} $.
    \end{itemize}
    Then,
    \begin{align*}
        \det\left(\SSigma^{(k+1)}\right) = \det(\SSigma^{(k)}) \cdot \left( \rr^\top \left(\SSigma^{(k)}\right)^{-1} \rr\right).
    \end{align*}
    If moreover $\SSigma^{(k)} \succeq \II_d$, then
    \begin{align*}
        \det(\SSigma^{(k+1)}) \le d \cdot \det(\SSigma^{(k)}).
    \end{align*}
\end{proposition}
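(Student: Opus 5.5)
We need the identity $\det(\SSigma^{(k+1)}) = \det(\SSigma^{(k)})\,\rr^\top (\SSigma^{(k)})^{-1}\rr$, followed by the bound $\det(\SSigma^{(k+1)}) \le d\det(\SSigma^{(k)})$ when $\SSigma^{(k)}\succeq \II_d$.

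\textbf{Step 1: reduce to a minor of a congruent matrix.} Write $\SSigma = \SSigma^{(k)}$ and $\TT$-free notation $\BB \colonequals (\II_d - \rr\ee_i^\top)\SSigma(\II_d - \ee_i\rr^\top)$. By Proposition~\ref{pro:PiPe}, the row and column of $\BB$ indexed by $i$ vanish: $\ee_i^\top(\II_d - \rr\ee_i^\top) = \ee_i^\top - \rr[i]\ee_i^\top = \bvec{0}^\top$, since $\rr[i]=1$. The operators $\PPi_1\PP_{1,i}$ only permute rows and columns and delete row and column $i$. Hence $\SSigma^{(k+1)}$ is permutation-similar to $\BB[-i,-i]$, and
\[
\det(\SSigma^{(k+1)}) = \det(\BB[-i,-i]).
\]

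\textbf{Step 2: use an invertible change of basis.} Let $\MM = \II_d - \rr\ee_i^\top + \ee_i\ee_i^\top$. This agrees with $\II_d - \rr\ee_i^\top$ except in column $i$, which becomes $\ee_i$ instead of $\ee_i - \rr$. Equivalently, column $i$ of $\MM$ is $\ee_i$ and every other column $j$ is $\ee_j$, so $\MM = \II_d$ in those columns. That choice is wrong, so instead take $\MM$ to be $\II_d - \rr\ee_i^\top$ with row $i$ replaced by a suitable row so that $\MM$ is invertible.

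A cleaner route is the Schur complement formula. For the vector $\rr$ with $\rr[i]=1$, consider
\[
\CC = \begin{pmatrix} \SSigma & \SSigma\,\ee_i \\ \ee_i^\top\SSigma & \ee_i^\top \SSigma\,\ee_i\end{pmatrix}.
\]
This is also not quite right, so I will use the direct argument. The matrix $\BB$ is $\SSigma$ restricted, via congruence, to the quotient in which $\rr$ is killed. Pick the basis $\{\ee_j\}_{j\neq i}\cup\{\rr\}$ of $\bR^d$; it is a basis because $\rr[i]=1$. Let $\NN$ be the matrix whose columns are $\ee_j$ for $j\ne i$ and $\rr$ in position $i$. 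Then $\det\NN = \rr[i] = 1$, and
\[
(\II_d - \rr\ee_i^\top)\NN = \bigl[\text{columns } \ee_j \ (j\neq i),\ \bvec{0} \text{ in position } i\bigr].
\]
This shows that $\BB[-i,-i]$ is the minor of the congruent matrix $\NN^{-1}$-related form on the complement of $\rr$.

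\textbf{Step 3: identify $\BB[-i,-i]$ with a Schur complement (the main obstacle).} The cleanest way to get the formula is to invoke the Gaussian conditioning interpretation. $\BB[-i,-i]$ is the covariance of $\zz[-i] - \rr[-i]\zz[i]$ for $\zz\sim\calN(\zzero,\SSigma)$. Equivalently, it is the covariance of $\zz$ projected along $\rr$ onto the coordinate hyperplane $\{x_i=0\}$.

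To compute its determinant, use the identity $\det\SSigma = \det(\NN^\top \SSigma^{-1}\NN)^{-1}$, together with the fact that $\NN^{-1}$ is unimodular. In coordinates $\ww = \NN^{-1}\zz$, the block $\ww[-i]$ equals $\zz[-i]-\rr[-i]\zz[i]$, and $\ww[i] = \zz[i]$.

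The precision matrix of $\ww$ is $\NN^\top\SSigma^{-1}\NN$, and its $(i,i)$ entry is $\rr^\top\SSigma^{-1}\rr$. The block formula for inverses gives
\[
\det\bigl(\mathrm{Cov}(\ww[-i])\bigr) = \det\bigl(\mathrm{Cov}(\ww)\bigr)\cdot\bigl[(\mathrm{Cov}\,\ww)^{-1}\bigr]_{ii}.
\]
Here $\det(\mathrm{Cov}\,\ww)=\det\SSigma$ because $\det\NN=1$, and $\bigl[(\mathrm{Cov}\,\ww)^{-1}\bigr]_{ii} = \rr^\top\SSigma^{-1}\rr$. This yields the identity. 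The only real care needed is checking the block formula $\det(\XX_{-i,-i}) = \det(\XX)\,(\XX^{-1})_{ii}$, which is Cramer's rule.

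\textbf{Step 4: deduce the bound.} If $\SSigma\succeq\II_d$, then $\SSigma^{-1}\preceq\II_d$, so $\rr^\top\SSigma^{-1}\rr\le\|\rr\|_2^2$. Since $|\rr[j]|\le 1$ for all $j$ by the pivot choice, $\|\rr\|_2^2\le d$. Therefore $\det(\SSigma^{(k+1)}) \le d\cdot\det(\SSigma^{(k)})$.
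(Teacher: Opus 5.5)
Your proof is correct, but it takes a different route from the paper. The paper writes (a permutation of) $\SSigma^{(k+1)}$ explicitly as the Schur complement $\Tilde{\SSigma} - \ssigma\ssigma^\top/s$ plus the rank-one term $s(\ssigma/s - \rr[-i])(\ssigma/s - \rr[-i])^\top$, where $s = \SSigma^{(k)}[i,i]$ and $\ssigma = \SSigma^{(k)}[-i,i]$. It then applies the matrix determinant lemma and Schur's formula. Finally it uses the block-inverse formula for $(\SSigma^{(k)})^{-1}$ to recognize the resulting factor $1 + s(\ssigma/s - \rr[-i])^\top(\Tilde{\SSigma} - \ssigma\ssigma^\top/s)^{-1}(\ssigma/s - \rr[-i])$ as $s\,\rr^\top(\SSigma^{(k)})^{-1}\rr$.

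You instead make the unimodular change of basis $\boldsymbol{\mathit{N}} = \II_d + (\rr - \ee_i)\ee_i^\top$, so that $\boldsymbol{\mathit{N}}\ee_i = \rr$ and $\det\boldsymbol{\mathit{N}} = \rr[i] = 1$. Write $\SSigma = \SSigma^{(k)}$. The inverse $\boldsymbol{\mathit{N}}^{-1} = \II_d - (\rr - \ee_i)\ee_i^\top$ has the same rows $j \neq i$ as $\II_d - \rr\ee_i^\top$. Hence $\BB[-i,-i]$ is exactly the principal submatrix of $\XX = \boldsymbol{\mathit{N}}^{-1}\SSigma\boldsymbol{\mathit{N}}^{-\top}$ with row and column $i$ deleted. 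Cramer's rule $\det\XX[-i,-i] = \det(\XX)\,(\XX^{-1})[i,i]$, with $\det\XX = \det\SSigma$ and $(\XX^{-1})[i,i] = (\boldsymbol{\mathit{N}}^\top\SSigma^{-1}\boldsymbol{\mathit{N}})[i,i] = \rr^\top\SSigma^{-1}\rr$, then gives the identity in one line.

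Your route is shorter and avoids the block-inverse bookkeeping. It also explains why the quadratic form $\rr^\top\SSigma^{-1}\rr$ appears: it is the precision of the new $i$th coordinate. The paper's computation instead buys an explicit structural description of the update: up to a permutation, $\SSigma^{(k+1)}$ is the conditional covariance of the other coordinates given coordinate $i$, plus a positive semidefinite rank-one correction. The final bound $\det(\SSigma^{(k+1)}) \le d\det(\SSigma^{(k)})$ is derived exactly as in the paper.

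When you write this up, delete the abandoned attempts in Step 2 (the first choice of matrix, and the block matrix you flag as not quite right). The Gaussian-conditioning language in Step 3 is also unnecessary, since the congruence above is purely algebraic.
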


\begin{proof}
    If we denote $s \colonequals \SSigma^{(k)}[i,i]$, $\ssigma \colonequals \SSigma^{(k)}[-i,i]$, and $\Tilde{\SSigma} \colonequals \SSigma^{(k)}[-i,-i]$, then
    \begin{align*}
        &\det\left(\SSigma^{(k+1)}\right)\\ &= \det\left(\PPi_{1} \PP_{1,i} \left(\II_{d} - \rr\ee_{i}^\top\right)\SSigma^{(k)}\left(\II_{d} - \ee_{i}\rr^\top\right) \PP_{1,i}^{\top} \PPi_1^{\top}\right)\\
        &= \det\left(\PPi_{1} \PP_{1,i} \left(\SSigma^{(k)} - \frac{\SSigma^{(k)}[:,i]\SSigma^{(k)}[:,i]^\top}{\SSigma^{(k)}[i,i]} + \SSigma^{(k)}[i,i] \cdot \left(\frac{\SSigma^{(k)}[:,i]}{\SSigma^{(k)}[i,i]} - \rr\right) \left(\frac{\SSigma^{(k)}[:,i]}{\SSigma^{(k)}[i,i]} - \rr\right)^\top\right)\PP_{1,i}^{\top} \PPi_1^{\top}\right)\\
        &= \det\left(\Tilde{\SSigma} - \frac{\ssigma \ssigma^\top}{s} + s\left(\frac{\ssigma}{s} - \rr[-i]\right)\left(\frac{\ssigma}{s} - \rr[-i]\right)^\top\right).
    \end{align*}

    Note that $\Tilde{\SSigma} - \frac{\ssigma \ssigma^\top}{s}$ is the Schur complement of $\SSigma^{(k)}$ with respect to $\SSigma^{(k)}[i,i]$. Permuting row $i$ and column $i$ of $\SSigma^{(k)}$, we may express $\SSigma^{(k)}$ as the following block matrix
    \begin{align*}
        \SSigma^{(k)} = \begin{bmatrix}
            s & \ssigma^\top\\
            \ssigma & \Tilde{\SSigma}
        \end{bmatrix},
    \end{align*}
    and its inverse is
    \begin{align*}
        \left(\SSigma^{(k)}\right)^{-1} = \begin{bmatrix}
            \frac{1}{s} + \frac{1}{s^2} \ssigma^\top \left(\Tilde{\SSigma} - \frac{\ssigma \ssigma^\top}{s}\right)^{-1} \ssigma  & -\frac{1}{s}\ssigma^\top\left(\Tilde{\SSigma} - \frac{\ssigma \ssigma^\top}{s}\right)^{-1}\\
            -\frac{1}{s}\left(\Tilde{\SSigma} - \frac{\ssigma \ssigma^\top}{s}\right)^{-1} \ssigma & \left(\Tilde{\SSigma} - \frac{\ssigma \ssigma^\top}{s}\right)^{-1}
        \end{bmatrix}.
    \end{align*}

    Thus, we have
    \begin{align*}
        &\rr^\top \left(\SSigma^{(k)}\right)^{-1} \rr\\
        &= \begin{bmatrix}
            1 & \rr[-i]^\top
        \end{bmatrix} \begin{bmatrix}
            \frac{1}{s} + \frac{1}{s^2} \ssigma^\top \left(\Tilde{\SSigma} - \frac{\ssigma \ssigma^\top}{s}\right)^{-1} \ssigma  & -\frac{1}{s}\ssigma^\top\left(\Tilde{\SSigma} - \frac{\ssigma \ssigma^\top}{s}\right)^{-1}\\
            -\frac{1}{s}\left(\Tilde{\SSigma} - \frac{\ssigma \ssigma^\top}{s}\right)^{-1} \ssigma & \left(\Tilde{\SSigma} - \frac{\ssigma \ssigma^\top}{s}\right)^{-1}
        \end{bmatrix}\begin{bmatrix}
            1 \\ \rr[-i]
        \end{bmatrix}\\
        &= \frac{1}{s} +\left(\frac{\ssigma}{s} - \rr[-i]\right)^\top\left(\Tilde{\SSigma} - \frac{\ssigma \ssigma^\top}{s}\right)^{-1}\left(\frac{\ssigma}{s} - \rr[-i]\right).
    \end{align*}

    By the matrix determinant lemma,
    \begin{align*}
        \det\left(\SSigma^{(k+1)}\right) &= \det\left(\Tilde{\SSigma} - \frac{\ssigma \ssigma^\top}{s} + s\left(\frac{\ssigma}{s} - \rr[-i]\right)\left(\frac{\ssigma}{s} - \rr[-i]\right)^\top\right)\\
        &= \det\left(\Tilde{\SSigma} - \frac{\ssigma \ssigma^\top}{s}\right)\left(1 + s\left(\frac{\ssigma}{s} - \rr[-i]\right)^\top \left(\Tilde{\SSigma} - \frac{\ssigma \ssigma^\top}{s}\right)^{-1} \left(\frac{\ssigma}{s} - \rr[-i]\right)\right)\\
        &= \det\left(\Tilde{\SSigma} - \frac{\ssigma \ssigma^\top}{s}\right) \cdot \left(s \rr^\top \left(\SSigma^{(k)}\right)^{-1} \rr\right).
    \end{align*}

    By Schur's formula,
    \begin{align*}
        \det\left(\Tilde{\SSigma} - \frac{\ssigma \ssigma^\top}{s}\right) = \frac{\det(\SSigma^{(k)})}{s},
    \end{align*}
    and we get
    \begin{align*}
        \det\left(\SSigma^{(k+1)}\right) = \det(\SSigma^{(k)}) \cdot \left( \rr^\top \left(\SSigma^{(k)}\right)^{-1} \rr\right).
    \end{align*}
    
    Since $\|\rr\|_{\infty} \le 1$, we have $\|\rr\|_2^2 \le d$. Hence, if $\SSigma^{(k)} \succeq \II_d$, then 
    \begin{align*}
        \det(\SSigma^{(k+1)}) &= \det(\SSigma^{(k)}) \cdot \left( \rr^\top \left(\SSigma^{(k)}\right)^{-1} \rr\right)\\
        &\le \det(\SSigma^{(k)}) \cdot \|\rr\|_2^2\\
        &\le d \cdot \det(\SSigma^{(k)}).
    \end{align*}
\end{proof}

\section{Proof of Main Result}

We are now ready to give the full proof of Theorem~\ref{thm:main}. We break the proof into a few propositions.

\begin{proposition}\label{prop:isotropic-prob}
    Fix $d \ge 4$. Let $\vv \in \bR^d$ be a unit vector. Define $E \subseteq \bR^d$ as
    \begin{align*}
        E &\colonequals \Bigg\{\yy \in \bR^d \bigg\vert \vv[i]^2 \ge \frac{1}{d}, \|\rr\|_2^2 \ge \frac{d}{2}, \text{and } \vv[i] \langle \vv[-i], \rr[-i] \rangle \le 0,\\
        &\quad \text{ where } i \colonequals \argmax_{1 \leq j \leq d} |\yy[j]|, \rr \colonequals \frac{\yy}{\yy[i]} \Bigg\}.
    \end{align*}

    Then,
    \begin{align*}
        \Pr_{\yy \sim \calN(\zzero, \II_d)}\left(\yy \in E\right) \ge \frac{0.16^d}{2d}.
    \end{align*}
\end{proposition}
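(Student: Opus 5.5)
We will lower bound the probability by restricting to a more specific event. Fix an index $i^\ast$ with $\vv[i^\ast]^2 \ge \frac{1}{d}$; one exists because $\vv$ is a unit vector. By replacing $\vv$ with $-\vv$ if necessary we may assume $\vv[i^\ast] > 0$; this leaves $E$ unchanged, since condition (3) is invariant under $\vv \mapsto -\vv$. Let $S_+ = \{j \ne i^\ast : \vv[j] \ge 0\}$ and $S_- = \{j \ne i^\ast : \vv[j] < 0\}$. Since $\rr = \yy/\yy[i]$, condition (3) reads $\vv[i^\ast]\sum_{j\ne i^\ast} \vv[j]\yy[j]/\yy[i^\ast] \le 0$.

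We define a sub-event $F \subseteq E$ by sign and magnitude constraints:
\begin{itemize}
\item $\yy[i^\ast] \ge a$;
\item for every $j \ne i^\ast$, $b \le |\yy[j]| \le a$;
\item $\yy[j] \le 0$ for $j \in S_+$ and $\yy[j] \ge 0$ for $j \in S_-$.
\end{itemize}
The thresholds are $0 < b < a$, to be chosen below.

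On $F$ the maximum is attained at $i^\ast$ (ties have probability zero). So $i = i^\ast$ and condition (1) holds. Every term $\vv[j]\yy[j]$ is nonpositive and $\yy[i^\ast] > 0$, so condition (3) holds. Finally,
\[
\|\rr\|_2^2 \ge 1 + (d-1)\frac{b^2}{\yy[i^\ast]^2},
\]
so condition (2) requires an upper bound on $\yy[i^\ast]$ as well. We therefore add the constraint $\yy[i^\ast] \le a'$ and need $1 + (d-1)b^2/a'^2 \ge d/2$. For $d \ge 4$ it suffices that $b^2/a'^2 \ge 1/2$, since $1 + (d-1)/2 \ge d/2$.

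A concrete choice is $b = 1$, $a = 1.2$, $a' = \sqrt{2}$. Then each coordinate $j \ne i^\ast$ must lie in a fixed sign-interval $[1, 1.2]$ (or its negative), which has probability $\Phi(1.2) - \Phi(1) \approx 0.0436$. That is too small for the base $0.16$, so the intervals must be enlarged. The tension is that off-pivot coordinates must not exceed $\yy[i^\ast]$, while $\yy[i^\ast]$ must be at most $\sqrt{2}\,b$.

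Better: let the pivot be $\yy[i^\ast] = m$ and require each other coordinate to have the correct sign with $|\yy[j]| \in [m/\sqrt{2}, m]$. Then $\|\rr\|_2^2 \ge 1 + (d-1)/2 \ge d/2$ automatically. Take $m \in [1.4, 1.6]$, say. Each off-pivot coordinate then lies in a one-sided interval of mass at least $\Phi(1.131) - \Phi(1.1314\ldots)$-type, and we choose the $m$-window to maximize the product.

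It is cleaner to condition on $m$: the probability is
\[
\int \phi(m)\left(\Phi(m) - \Phi(m/\sqrt{2})\right)^{d-1} dm.
\]
The one-sided mass $\Phi(m) - \Phi(m/\sqrt{2})$ is maximized near $m \approx 1.7$, with value about $0.955 - 0.885 = 0.07$. This is still below $0.16$, so this rigid version also falls short.

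We therefore loosen the constraints. We no longer require every off-pivot coordinate to be individually large. We need only $\sum_{j \ne i^\ast} \yy[j]^2 \ge (d/2 - 1)\,m^2$ together with $|\yy[j]| \le m$ and the correct signs. With the signs fixed, the coordinates are i.i.d.\ half-normals truncated to $[0, m]$. The sign pattern costs $2^{-(d-1)}$, and truncation to $[0,m]$ costs a factor of about $(1 - 2(1-\Phi(m)))^{d-1}$. It remains to have the empirical mean of the squares be at least $m^2/2$. Choosing $m$ around $1.2$, the truncated second moment $\mathbb{E}[x^2 \mid |x| \le m]$ is roughly $0.4$ to $0.45$, while $m^2/2 = 0.72$. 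This is a large deviation, whose cost we bound by a Chernoff or exponential tilting estimate, giving a per-coordinate factor.

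The main obstacle is this calculation: optimizing $m$ and the tilt so that the product of the sign factor $1/2$, the truncation factor, and the large-deviation factor is at least $0.16$ per coordinate. Because a one-sided interval of half-width mass $0.32$ in $|x|$ is needed, we would rather restrict each $|\yy[j]|$ to a window $[c, m]$ of two-sided mass at least $0.32$ with $c^2 \ge m^2/2$, as an explicit product event. A window $[0.85, 1.25]$ has mass about $2(0.894 - 0.802) = 0.18$, which is too small, so a truly rigorous choice may require the probabilistic sum version. We would also integrate $m$ over a short interval; this contributes a constant factor, and the $\frac{1}{2d}$ absorbs losses from the sign of $\yy[i^\ast]$ and from the choice of pivot location.
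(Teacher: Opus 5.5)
Your proposal does not reach a proof. Every concrete event you write down falls short, and the remaining large-deviation estimate is left undone. The root cause is that you enforce condition (3) coordinate by coordinate, prescribing the sign of each $\yy[j]$, $j\ne i^\ast$. Signs are independent of magnitudes, so this costs exactly $2^{-(d-1)}$. Against the target $0.16^d/(2d)$, your magnitude event would then need probability of order $0.32^d$.

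No coordinatewise window can deliver that. A window forcing $\rr[j]^2\ge 1/2$ relative to the pivot value $m$ is $|\yy[j]|\in[m/\sqrt2,m]$. Its one-sided mass $\Phi(m)-\Phi(m/\sqrt2)$ is at most about $0.083$, maximized at $m=\sqrt{2\ln 2}\approx 1.18$ rather than near $1.7$ as you state. So your rigid event has probability at most about $0.083^{d-1}$, which is already below $0.16^d/(2d)$ for every $d\ge 9$.

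The probabilistic-sum version is not hopeless asymptotically, since $\Pr(\|\rr\|_2^2\ge d/2)$ decays far more slowly than $0.32^d$. But you would need a non-asymptotic tilting lower bound with explicit constants valid for every $d\ge 4$. That is exactly the calculation you flag as the main obstacle, and you do not carry it out.

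The missing idea is to obtain condition (3) from a single global symmetry instead of $d-1$ independent sign choices. Negating all coordinates $\yy[j]$, $j\ne i^\ast$, at once preserves the law of $\yy$, the pivot index $i$, and $\|\rr\|_2$, while mapping $\rr[-i]$ to $-\rr[-i]$. Hence, conditionally on $\{i=i^\ast,\ \|\rr\|_2^2\ge d/2\}$, condition (3) holds with probability at least $1/2$.

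Likewise, exchangeability gives $\Pr(i=i^\ast)=1/d$. Because $\|\rr\|_2^2=\|\yy\|_2^2/\max_j \yy[j]^2$ is permutation-invariant, the event $\{i=i^\ast\}$ is independent of $\|\rr\|_2$, so you need not constrain the pivot coordinate separately.

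Once the sign cost is a single factor $1/2$, the two-sided version of the window you already tried suffices. On $\{1/2\le \yy[j]^2\le 1 \text{ for all } j\}$ one has $\|\rr\|_2^2\ge (d/2)/1$. This event has probability $\bigl(2(\Phi(1)-\Phi(1/\sqrt2))\bigr)^d\ge 0.16^d$. Multiplying $\frac1d\cdot 0.16^d\cdot\frac12$ gives the claim, and this is exactly the paper's proof.
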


\begin{proof}
    There exists at least one $i^* \in [d]$ such that $\vv[i^*]^2 \ge \frac{1}{d}$ as $\vv \in \bR^d$ is a unit vector. 
    Fix one such $i^*$. Since the distribution of $i = \argmax |\yy[j]|$ is uniform over $[d]$ for $\yy \sim \calN(\zzero, \II_d)$, we have
    \begin{align*}
        \Pr_{\yy \sim \calN(\zzero, \II_d)}\left(i = i^*\right) = \frac{1}{d}.
    \end{align*}
    For every $j \in [d]$, we have
    \begin{align*}
        \Pr_{y_j \sim \calN(0,1)}\left(\frac{1}{2} \le y_j^2 \le 1\right) &= 2\left(\Phi(1) - \Phi(1/\sqrt{2})\right) \ge 0.16.
    \end{align*}
    Thus,
    \begin{align*}
        \Pr_{\yy \sim \calN(\zzero, \II_d)}\left(\|\rr\|_2^2 \ge \frac{d}{2}\right) &\ge \Pr_{\yy \sim \calN(\zzero, \II_d)}\left(\frac{1}{2} \le y_j^2 \le 1, \text{ for all } j \in [d]\right)\\
        &= \Pr_{x \sim \calN(0, 1)}\left(\frac{1}{2} \le x^2 \le 1\right)^d\\
        &\ge 0.16^d.
    \end{align*}
    Note that for $\rr = \yy/\yy[i]$ where $\yy \sim \calN(\zzero, \II_d)$, the distribution of $\|\rr\|_2^2$ is independent of $i = \argmax |\yy[j]|$. Thus,
    \begin{align*}
        \Pr_{\yy \sim \calN(\zzero, \II_d)}\left(i = i^*, \|\rr\|_2^2 \ge \frac{d}{2}\right) &= \Pr_{\yy \sim \calN(\zzero, \II_d)}\left(i = i^*\right)\Pr_{\yy \sim \calN(\zzero, \II_d)}\left(\|\rr\|_2^2 \ge \frac{d}{2}\right)\\
        &\ge \frac{0.16^d}{d}.
    \end{align*}
    
    Finally, conditional on the event that $i = i^*$ and $\|\rr\|_2^2 \ge \frac{d}{2}$, the conditional distribution of $\rr[-i] = \yy[-i] / \yy[i]$ is symmetric with respect to the origin. As a result,
    \begin{align*}
        \Pr_{\yy \sim \calN(\zzero, \II_d)}\left( \vv[i] \langle \vv[-i], \rr[-i] \rangle \le 0\bigg\vert i = i^*, \|\rr\|_2^2 \ge \frac{d}{2}\right) \ge \frac{1}{2},
    \end{align*}
    and we get
    \begin{align*}
        \Pr_{\yy \sim \calN(\zzero, \II_d)}\left(\yy \in E\right) &\ge \Pr_{\yy \sim \calN(\zzero, \II_d)}\left(i = i^*, \|\rr\|_2^2 \ge \frac{d}{2}\right)\Pr_{\yy \sim \calN(\zzero, \II_d)}\left( \vv[i] \langle \vv[-i], \rr[-i] \rangle \le 0\bigg\vert i = i^*, \|\rr\|_2^2 \ge \frac{d}{2}\right)\\
        &\ge \frac{0.16^d}{2d}.
    \end{align*}
\end{proof}

\begin{proposition}\label{prop:covariance-growth}
    Fix $1 \le k \le n-3$. Let $d \colonequals n-k+1$ and $\SSigma^{(k)} \succeq \II_{d}$ be a $d \times d$ positive-definite matrix. Let $\SSigma^{(k+1)} \in \bR^{(d-1) \times (d-1)}$ be obtained from $\SSigma^{(k)}$ according to the covariance update process as in Proposition~\ref{pro:covariance_update}. Then,
    \begin{align*}
        \Pr\left(\|\SSigma^{(k+1)}\|_2 \ge \left(\frac{3}{2} - \frac{2}{d}\right)\|\SSigma^{(k)}\|_2 \right) \ge \frac{0.16^d}{2d \cdot \sqrt{\det(\SSigma^{(k)})}}.
    \end{align*}
\end{proposition}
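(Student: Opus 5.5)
The plan is to combine three earlier results: the deterministic sufficient condition of Proposition~\ref{prop:suff-cond-norm-increase} with $C = 2$, the isotropic probability estimate of Proposition~\ref{prop:isotropic-prob}, and the comparison inequality of Lemma~\ref{lem:conic-prob-transport}. Condition on $\SSigma^{(k)}$ and fix a top unit eigenvector $\vv$ of $\SSigma^{(k)}$. By Proposition~\ref{pro:covariance_update}, $\SSigma^{(k+1)}$ is the image of $\SSigma^{(k)}$ under the update driven by $\yy \sim \calN(\zzero, \SSigma^{(k)})$. Let $E \subseteq \bR^d$ be exactly the set defined in Proposition~\ref{prop:isotropic-prob} for this $\vv$. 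Since $k \le n-3$, we have $d \ge 4$, so that proposition applies.

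\textbf{Step 1: the event $E$ forces growth.} If $\yy \in E$, then the hypotheses of Proposition~\ref{prop:suff-cond-norm-increase} hold with $C = 2$:
\begin{itemize}
\item $\vv[i]^2 \ge 1/d$;
\item $\|\rr\|_2^2 \ge d/2 \ge 2$, where the last inequality uses $d \ge 4$;
\item $\vv[i]\langle \vv[-i], \rr[-i]\rangle \le 0$.
\end{itemize}
That proposition then gives $\|\SSigma^{(k+1)}\|_2 \ge (3/2 - 2/d)\|\SSigma^{(k)}\|_2$. So the probability in question is at least $\Pr_{\yy\sim\calN(\zzero,\SSigma^{(k)})}(\yy \in E)$.

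\textbf{Step 2: $E$ is conic.} For $t > 0$, replacing $\yy$ by $t\yy$ leaves $i = \argmax_j |\yy[j]|$ unchanged (with ties broken consistently). It also leaves $\rr = \yy/\yy[i]$ unchanged. All three defining conditions depend only on $i$, $\rr$ and the fixed $\vv$, so $t\yy \in E$ whenever $\yy \in E$. Ties occur with probability zero and can be ignored, and $E$ is measurable.

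\textbf{Step 3: compare to the isotropic Gaussian.} Apply Lemma~\ref{lem:conic-prob-transport} with $\SSigma = \SSigma^{(k)}$. The hypothesis $\SSigma^{(k)} \succeq \II_d$ gives $\|(\SSigma^{(k)})^{-1}\|_2 \le 1$, so the factor $\|\SSigma^{-1}\|_2^{d/2}$ is at most $1$. Hence
\[
\Pr_{\yy \sim \calN(\zzero,\SSigma^{(k)})}(\yy\in E) \ge \frac{1}{\sqrt{\det(\SSigma^{(k)})}}\,\Pr_{\yy\sim\calN(\zzero,\II_d)}(\yy \in E) \ge \frac{0.16^d}{2d\sqrt{\det(\SSigma^{(k)})}},
\]
where the last inequality is Proposition~\ref{prop:isotropic-prob}.

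The only point needing care is Step 2, specifically that $E$ is conic and independent of the randomness. The set depends on $\vv$, but $\vv$ is determined by $\SSigma^{(k)}$, which is fixed under the conditioning. Scale invariance then follows because the pivot and the normalized vector $\rr$ are homogeneous of degree zero in $\yy$.
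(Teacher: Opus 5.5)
Your proposal is correct and follows the paper's proof essentially step for step. The paper likewise takes $E$ from Proposition~\ref{prop:isotropic-prob}, notes that $E$ is conic and that $\|(\SSigma^{(k)})^{-1}\|_2 \le 1$, applies Lemma~\ref{lem:conic-prob-transport} together with Proposition~\ref{prop:isotropic-prob}, and concludes via Proposition~\ref{prop:suff-cond-norm-increase} with $C=2$; you also spell out the checks it leaves implicit: $d \ge 4$ from $k \le n-3$, $d/2 \ge 2$, and the degree-zero homogeneity of $i$ and $\rr$.
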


\begin{proof}
    Recall that to generate $\SSigma^{(k+1)}$ from $\SSigma^{(k)}$ according to the covariance update process, we sample $\yy \sim \calN(\zzero, \SSigma^{(k)})$, and set $\SSigma^{(k+1)} = \PPi_1  \PP_{1,i} \left( \II  - \rr \ee_i^{\top} \right) \SSigma^{(k)} 
  \left( \II  - \ee_i \rr^{\top} \right) \PP_{1,i}^{\top} \PPi_1^{\top}$, where $i = \argmax |\yy[j]|$ and $\rr = \yy/\yy[i]$.

    Let $\vv \in \bR^d$ be a top unit eigenvector of $\SSigma^{(k)}$. Define $E \subseteq \bR^d$ as in Proposition~\ref{prop:isotropic-prob},
    \begin{align*}
        E &\colonequals \Bigg\{\yy \in \bR^d \bigg\vert \vv[i]^2 \ge \frac{1}{d}, \|\rr\|_2^2 \ge \frac{d}{2}, \text{and } \vv[i] \langle \vv[-i], \rr[-i] \rangle \le 0,\\
        &\quad \text{ where } i \colonequals \argmax |\yy[j]|, \rr \colonequals \frac{\yy}{\yy[i]} \Bigg\}.
    \end{align*}
    Observe that $E$ is conic and $\left\|\left(\SSigma^{(k)}\right)^{-1}\right\|_2 \le 1$. By Lemma~\ref{lem:conic-prob-transport} and Proposition~\ref{prop:isotropic-prob}, we have
    \begin{align*}
        \Pr_{\yy \sim \calN(\zzero, \SSigma^{(k)})}\left(\yy \in E\right) &\ge \frac{1}{\left\|\left(\SSigma^{(k)}\right)^{-1}\right\|_2^{\frac{d}{2}}\sqrt{\det(\SSigma^{(k)})}}\cdot\Pr_{\yy \sim \calN(\zzero, \II_d)}\left(\yy \in E\right) \ge \frac{0.16^d}{2d \cdot \sqrt{\det(\SSigma^{(k)})}}.
    \end{align*}
    Finally, by Proposition~\ref{prop:suff-cond-norm-increase}, whenever $\yy \in E$, $\|\SSigma^{(k+1)}\|_2 \ge \left(\frac{3}{2} - \frac{2}{d}\right)\|\SSigma^{(k)}\|_2$. This finishes the proof that 
    \begin{align*}
        \Pr\left(\|\SSigma^{(k+1)}\|_2 \ge \left(\frac{3}{2} - \frac{2}{d}\right)\|\SSigma^{(k)}\|_2 \right) \ge \frac{0.16^d}{2d \cdot \sqrt{\det(\SSigma^{(k)})}}.
    \end{align*}
\end{proof}

\begin{proposition}\label{prop:large-operator-norm-bound}
    Fix $T \ge 4$. Let $T \le d \le n-2$. Consider the covariance update process that starts with $\SSigma^{(1)} = \II_n$. Let $r \colonequals d - T$ and $\SS^{(i)} \colonequals \SSigma^{(n-d+i)}$ for $0 \le i \le r$. Then,
    \begin{align*}
        \Pr\left(\frac{\|\SS^{(r)}\|_2}{\max\{N_d,1\}} \ge \left(\frac{3}{2} - \frac{2}{T}\right)^r \right) \ge \left(\frac{1}{2} - \frac{1}{n}\right) \cdot \frac{0.16^{\frac{r(r+2T+3)}{2}}}{(32\sqrt{2})^{r}(r+T+1)^{\frac{r(r+3)}{4}}n^{\frac{3r(r+T+2)}{2}}},
    \end{align*}
    where we define
    \[N_d \colonequals \frac{n}{e^{1 + \frac{1}{d+1}}(d+1 + 2\log(n))}.\]
\end{proposition}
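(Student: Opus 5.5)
The plan is to condition on a good starting covariance $\SS^{(0)} = \SSigma^{(n-d)}$ and then force $r$ consecutive multiplicative increases of the operator norm, each with the probability given by Proposition~\ref{prop:covariance-growth}.

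\textbf{Starting event.} The matrix $\SS^{(0)}$ has dimension $d+1$, since $k = n-d$ gives $n-k+1 = d+1$, and $k \ge 2$ because $d \le n-2$. Define the event $G$ that both of the following hold:
\begin{align*}
\det(\SS^{(0)}) &\ge \frac{1}{e}\left(\frac{n}{e(d+1+2\log(n))}\right)^{d+1}, \\
\det(\SS^{(0)}) &\le 512\, n^{3(d+1)+3}.
\end{align*}
\begin{itemize}
\item Proposition~\ref{prop:det-lower-bound}, applied with dimension $d+1$, shows the first inequality fails with probability at most $1/n$.
\item Proposition~\ref{prop:det-upper-bound} shows the second fails with probability at most $1/2$.
\end{itemize}
By a union bound, $\Pr(G) \ge 1/2 - 1/n$.

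On $G$, the operator norm of $\SS^{(0)}$ is at least the geometric mean of its eigenvalues, $\det(\SS^{(0)})^{1/(d+1)}$. Since $e^{-1/(d+1)}\cdot n/(e(d+1+2\log n)) = N_d$, this gives $\|\SS^{(0)}\|_2 \ge N_d$. Separately, Proposition~\ref{prop:cov-lower-bound} gives $\SS^{(i)} \succeq \II$, so $\|\SS^{(0)}\|_2 \ge 1$. Together, $\|\SS^{(0)}\|_2 \ge \max\{N_d,1\}$.

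\textbf{Iterating growth.} For $0 \le i \le r-1$, let $F_i$ be the event that
\[
\|\SS^{(i+1)}\|_2 \ge \left(\tfrac{3}{2} - \tfrac{2}{T}\right)\|\SS^{(i)}\|_2 .
\]
The matrix $\SS^{(i)}$ has dimension $D_i = d+1-i$. For $i \le r-1$ we have $D_i \ge T+2$, so $\frac{3}{2} - \frac{2}{D_i} \ge \frac{3}{2} - \frac{2}{T}$. The corresponding index is $k = n-d+i \le n-T-1 \le n-3$, so Proposition~\ref{prop:covariance-growth} applies.

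By the Markov structure of Proposition~\ref{pro:covariance_update}, conditional on the whole history up to $\SS^{(i)}$, the next update uses a fresh $\yy \sim \calN(\zzero,\SS^{(i)})$. Hence
\[
\Pr\bigl(F_i \mid \text{history}\bigr) \ge \frac{0.16^{D_i}}{2D_i\sqrt{\det(\SS^{(i)})}} .
\]

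To make this bound deterministic on $G$, use Proposition~\ref{prop:det-det-control}, which applies because each $\SS^{(j)} \succeq \II$. It gives
\[
\det(\SS^{(i)}) \le \det(\SS^{(0)})\prod_{j<i} D_j \le 512\, n^{3d+6}(d+1)^i .
\]
Multiplying the conditional bounds along the chain yields
\[
\Pr\Bigl(G \cap \textstyle\bigcap_{i<r} F_i\Bigr) \ge \Bigl(\tfrac12 - \tfrac1n\Bigr)\prod_{i=0}^{r-1} \frac{0.16^{d+1-i}}{2(d+1-i)\sqrt{512\, n^{3d+6}(d+1)^i}} .
\]
On this event, $\|\SS^{(r)}\|_2 \ge \bigl(\tfrac{3}{2} - \tfrac{2}{T}\bigr)^r \max\{N_d,1\}$, which is the event in the statement.

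\textbf{Main obstacle.} The probabilistic structure above is straightforward. The main work is simplifying the product into the stated form using $d = r+T$; this is routine bookkeeping but must be checked carefully.
\begin{itemize}
\item The powers of $0.16$ contribute $\sum_{i<r}(r+T+1-i) = \tfrac{r(r+2T+3)}{2}$, matching the stated exponent exactly.
\item The factors $2\sqrt{512} = 32\sqrt{2}$ give the $(32\sqrt2)^{r}$ term.
\item The factors $\prod_{i<r}(d+1-i)$ and $(d+1)^{\sum_{i<r} i/2}$ together are bounded by $(r+T+1)^{r + r(r-1)/4} = (r+T+1)^{r(r+3)/4}$.
\item The powers of $n$ contribute $n^{r(3d+6)/2} = n^{3r(r+T+2)/2}$.
\end{itemize}
These match the stated bound.
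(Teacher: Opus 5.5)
Your proposal is correct and follows the paper's proof essentially step for step: the same determinant-window event on $\SS^{(0)}$ (via Propositions~\ref{prop:det-upper-bound} and~\ref{prop:det-lower-bound}), the geometric-mean bound giving $\|\SS^{(0)}\|_2 \ge \max\{N_d,1\}$, and the chain rule over the growth events using Proposition~\ref{prop:covariance-growth} with determinants controlled by Proposition~\ref{prop:det-det-control}, followed by identical bookkeeping. Your explicit checks that $d+1-i \ge T+2$ (so that $\frac{3}{2} - \frac{2}{d+1-i} \ge \frac{3}{2} - \frac{2}{T}$) and that $k \le n-3$ fill in details the paper leaves implicit.
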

\begin{proof}
    Let $g \colonequals \left(\frac{3}{2} - \frac{2}{T}\right)^r$. For $0 \le i \le r-1$, define the event 
    \[E_i \colonequals \left\{\|\SS^{(i+1)}\|_2 \ge \left(\frac{3}{2} - \frac{2}{T}\right)\|\SS^{(i)}\|_2\right\}.\]
    We note that
    \begin{align*}
        \bigcap_{i=0}^{r-1} E_i &= \bigcap_{i=0}^{r-1} \left\{\|\SS^{(i+1)}\|_2 \ge \left(\frac{3}{2} - \frac{2}{T}\right)\|\SS^{(i)}\|_2\right\}\\
        &\subseteq \left\{\|\SS^{(r)}\|_2 \ge g\|\SS^{(0)}\|_2\right\},
    \end{align*}
    where we use that $\|\SS^{(i+1)}\|_2 \ge \left(\frac{3}{2} - \frac{2}{T}\right)\|\SS^{(i)}\|_2$ for all $0 \le i \le r-1$ implies that $\|\SS^{(r)}\|_2 \ge \left(\frac{3}{2} - \frac{2}{T}\right)^r \|\SS^{(0)}\|_2 =  g\|\SS^{(0)}\|_2$. Additionally, we define the event
    \begin{align*}
        D \colonequals \left\{\frac{1}{e} \left(\frac{n}{e(d+1 + 2\log(n))}\right)^{d+1}\le \det(\SS^{(0)}) \le 512n^{3d+6}\right\}.
    \end{align*}
    Since $\SS^{(0)}$ is a $(d+1) \times (d+1)$ positive definite matrix, $\|\SS^{(0)}\|_2 \ge \det(\SS^{(0)})^{\frac{1}{d+1}}$ deterministically. Conditioning on the event $D$, it holds that
    $\|\SS^{(0)}\|_2 \ge N_d$, where $N_d \colonequals n/(e^{1 + \frac{1}{d+1}}(d +1 + 2\log(n)))$. By Proposition~\ref{prop:cov-lower-bound}, $\|\SS^{(0)}\|_2 \ge 1$.
    
    Therefore, we have
    \begin{align*}
        &\Pr\left(\|\SS^{(r)}\|_2 \ge g \cdot \max\{N_d,1\}\right)\\
        &\ge \Pr\left(\left(\bigcap_{i=0}^{r-1} E_i\right) \bigcap D\right)\\
        &= \Pr(D)\prod_{i=0}^{r-1}\Pr\left(E_{i} \,\bigg\vert\,\left(\bigcap_{j=0}^{i-1} E_j\right)\bigcap D\right). \numberthis \label{ineq:growth-lower-bound}
    \end{align*}
    
    Since $\det(\SS^{(i)})\le \det(\SS^{(0)}) \prod_{j=0}^{i-1} (d+1-j)$ by Proposition~\ref{prop:det-det-control}, and $\SS^{(i)} \succeq \II_{d+1-i}$ by Proposition~\ref{prop:cov-lower-bound}, each conditional probability can be bounded by
    \begin{align*}
        &\Pr\left(E_{i} \,\bigg\vert\,\left(\bigcap_{j=0}^{i-1} E_j\right)\bigcap D\right)\\
        &= \Pr\left(\|\SS^{(i+1)}\|_2 \ge \left(\frac{3}{2} - \frac{2}{T}\right)\|\SS^{(i)}\|_2\,\bigg\vert\,\left(\bigcap_{j=0}^{i-1} E_j\right) \bigcap D\right)\\
        &\ge \Ex\left[\frac{0.16^{d+1-i}}{2(d+1-i)\sqrt{\det(\SS^{(i)})} } \,\bigg\vert\,\left(\bigcap_{j=0}^{i-1} E_j\right) \bigcap D\right] \qquad\qquad \text{(Proposition~\ref{prop:covariance-growth})}\\
        &\ge \frac{0.16^{d+1-i}}{2(d+1-i)\sqrt{512 n^{3d+6} \prod_{j=0}^{i-1}(d+1-j)} }\\
        &\ge \frac{0.16^{d+1-i}}{32\sqrt{2}(d+1)^{\frac{i}{2}+1} n^{\frac{3}{2}d+3}  }.
    \end{align*}
    Plugging this bound back to \eqref{ineq:growth-lower-bound}, we get
    \begin{align*}
        &\Pr\left(\|\SS^{(r)}\|_2 \ge g \cdot \max\{N_d,1\}\right)\\
        &\ge \Pr(D)\prod_{i=0}^{r-1}\Pr\left(E_{i} \,\bigg\vert\,\left(\bigcap_{j=0}^{i-1} E_j\right)\bigcap D\right)\\
        &\ge \Pr(D)\prod_{i=0}^{r-1} \frac{0.16^{d+1-i}}{32\sqrt{2}(d+1)^{\frac{i}{2}+1} n^{\frac{3}{2}d+3}}\\
        &\ge \Pr(D)\frac{0.16^{\frac{r(r+2T+3)}{2}}}{(32\sqrt{2})^{r}(r+T+1)^{\frac{r(r+3)}{4}}n^{\frac{3r(r+T+2)}{2}}}.
    \end{align*}
    Finally, by Proposition~\ref{prop:det-upper-bound} and Proposition~\ref{prop:det-lower-bound},
    \begin{align*}
        \Pr(D) &\ge 1 - \Pr\left(\det(\SS^{(0)}) \ge 512n^{3d+6}\right) - \Pr\left(\det(\SS^{(0)}) \le \frac{1}{e} \left(\frac{n}{e(d+1 + 2\log(n))}\right)^{d+1}\right)\\
        &\ge 1 - \frac{1}{2} - \frac{1}{n} =  \frac{1}{2} - \frac{1}{n},
    \end{align*}
    and we conclude that
    \begin{align*}
        \Pr\left(\|\SS^{(r)}\|_2 \ge g \cdot \max\{N_d,1\}\right) \ge \left(\frac{1}{2} - \frac{1}{n}\right) \cdot \frac{0.16^{\frac{r(r+2T+3)}{2}}}{(32\sqrt{2})^{r}(r+T+1)^{\frac{r(r+3)}{4}}n^{\frac{3r(r+T+2)}{2}}}.
    \end{align*}
\end{proof}

\begin{proposition}\label{prop:operator-norm-to-entry-magnitude}
    Fix $1 \le d \le n$. 
    For every $g > 0$, if $\UU$ is the upper triangular matrix produced by GEPP when run on a matrix of independent $\calN(0,1)$ variables, and $\SSigma^{(k)}$ is the covariance matrix of the first column of $\MM^{(k)}$, then
    \begin{align*}
        \Pr\left(\max_{i,j} \left|\UU[i,j]\right| \ge \sqrt{\frac{g}{2d}} \,\bigg \vert\, \|\SSigma^{(n-d+1)}\|_2 \ge g\right) \ge 0.47,
    \end{align*}
    provided that $\Pr\left(\|\SSigma^{(n-d+1)}\|_2 \ge g\right) > 0$.
\end{proposition}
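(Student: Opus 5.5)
Let $k \colonequals n-d+1$, so that $\SSigma^{(k)}$ is $d\times d$. The plan is to condition on the first $k-1$ columns of $\AA$. These determine $\SSigma^{(k)}$ and hence determine whether the event $\|\SSigma^{(k)}\|_2\ge g$ holds. By Lemma~\ref{lem:cov-det} (or Proposition~\ref{pro:covariance_update}), conditional on these columns, the first column $\yy \colonequals \MM^{(k)}[:,1]$ is distributed as $\calN(\zzero,\SSigma^{(k)})$. It then suffices to show that for every fixed positive-definite $d\times d$ matrix $\SSigma$ with $\|\SSigma\|_2\ge g$,
\[
\Pr_{\yy\sim\calN(\zzero,\SSigma)}\left(\max_j |\yy[j]| \ge \sqrt{g/(2d)}\right)\ge 0.47 .
\]
Averaging over the conditioning then gives the claimed conditional probability.

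The link to $\UU$ is that at step $k$ GEPP pivots on $i=\argmax_j|\yy[j]|$. The pivot entry $\UU[k,k]=\bb^{(k)}[k]$ equals $\yy[i]$, so $|\UU[k,k]|=\max_j|\yy[j]|$. (For $d=1$ this is $\AA^{(n)}[n,n]$, which is $\UU[n,n]$ by the convention in the definition.) So $\max_{i,j}|\UU[i,j]|\ge \max_j|\yy[j]|$, and the problem reduces to the Gaussian estimate above.

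To prove that estimate, let $\vv$ be a top unit eigenvector of $\SSigma$ and set $\lambda=\|\SSigma\|_2\ge g$. Then $\langle \vv,\yy\rangle\sim\calN(0,\lambda)$. By Cauchy--Schwarz, $|\langle\vv,\yy\rangle|\le \|\yy\|_2\le \sqrt d\,\max_j|\yy[j]|$. Hence
\[
\Pr\left(\max_j|\yy[j]|<\sqrt{g/(2d)}\right)\le \Pr\left(|\langle\vv,\yy\rangle| < \sqrt{g/2}\right)\le \Pr\left(|z|<1/\sqrt2\right),
\]
where $z\sim\calN(0,1)$ and we used $\sqrt{g/2}\le\sqrt{\lambda/2}$. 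Numerically, $2\Phi(1/\sqrt2)-1\approx 0.5205$, so the complement has probability at least $0.479\ge 0.47$.

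The only step needing care is the conditioning. We must check that the event $\{\|\SSigma^{(k)}\|_2\ge g\}$ is measurable with respect to the first $k-1$ columns of $\AA$, and that the conditional law of $\yy$ given those columns is exactly $\calN(\zzero,\SSigma^{(k)})$. Lemma~\ref{lem:cov-det} provides both facts. The bound then follows by integrating the pointwise estimate over the conditioning event, which has positive probability by hypothesis. There is no real obstacle beyond verifying the numerical constant $\Pr(|z|\ge 1/\sqrt2)\approx 0.4795$.
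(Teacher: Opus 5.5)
Your proof is correct and follows essentially the same route as the paper. Like the paper, you condition on the history so that the first column of $\MM^{(n-d+1)}$ is $\calN(\zzero,\SSigma^{(n-d+1)})$, project onto a top eigenvector to get probability at least $2(1-\Phi(1/\sqrt{2}))\approx 0.4795$, and pass via $\|\cdot\|_2\le\sqrt{d}\,\|\cdot\|_\infty$ to the pivot $\UU[n-d+1,n-d+1]$. Your explicit handling of the measurability of the conditioning event and of the $d=1$ convention is slightly more careful than the paper's write-up, but the argument is the same.
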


\begin{proof}
    Condition on the event that $\|\SSigma^{(n-d+1)}\|_2 \ge g$. Let $\vv$ be a top unit eigenvector of $\SSigma^{(n-d+1)}$. Note that in the covariance update process, $\SSigma^{(n-d+1)}$ is the covariance matrix of all the columns of $\MM^{(n-d+1)}$. In particular, conditional on the event that $\|\SSigma^{(n-d+1)}\|_2 \ge g$ and the history up to step $n-d$, the first column $\mm \colonequals \MM^{(n-d+1)}[:,1]$ of $\MM^{(n-d+1)}$ satisfies that
    \begin{align*}
        \Pr_{\mm \sim \calN(\zzero, \SSigma^{(n-d+1)})}\left(\|\mm\|_2^2 \ge \frac{g}{2} \right) &\ge \Pr_{\mm \sim \calN(\zzero, \SSigma^{(n-d+1)})}\left(\langle\vv, \mm \rangle^2 \ge \frac{g}{2}\right)\\
        &= \Pr_{x \sim \calN(\zzero, \|\SSigma^{(n-d+1)}\|_2)}\left(x^2 \ge \frac{g}{2}\right)\\
        &= 2\left(1 - \Phi\left(\sqrt{\frac{g}{2\|\SSigma^{(n-d+1)}\|_2}}\right)\right)\\
        &\ge 2\left(1 - \Phi\left(1/\sqrt{2}\right)\right)\\
        &\ge 0.47.
    \end{align*}
    Once $\|\mm\|_2^2 \ge \frac{g}{2}$, we know that at least one entry of $\mm \in \bR^{d}$ has absolute value at least $\frac{\|m\|_2}{\sqrt{d}} \ge  \sqrt{\frac{g}{2d}}$. Moreover, the entry of $\mm$ with the largest absolute value is chosen as the pivot and appears at the diagonal entry $\UU[n-d+1,n-d+1]$. Therefore,
    \begin{align*}
        \Pr\left(\max_{i,j} \left|\UU[i,j]\right| \ge \sqrt{\frac{g}{2d}} \,\bigg \vert\, \|\SSigma^{(n-d+1)}\|_2 \ge g\right) \ge 0.47.
    \end{align*}
\end{proof}

\subsection{Proof of Theorem~\ref{thm:main}} \label{sec:proof-main}
We now prove our main theorem. Consider the covariance update process that starts with $\SSigma^{(1)} = \II_n$. Fix $T \ge 4$. Let $T \le d \le n-2$ and $r \colonequals d - T$. Consider the event
\[G_r \colonequals \left\{\|\SSigma^{(n-d+r)}\|_2 \ge \left(\frac{3}{2} - \frac{2}{T}\right)^r \max\{N_d,1\}\right\},\]
where $N_d \colonequals \frac{n}{e^{1 + \frac{1}{d+1}}(d+1 + 2\log(n))}$. By Proposition~\ref{prop:large-operator-norm-bound},
\begin{align*}
        \Pr\left(G_r\right) \ge \left(\frac{1}{2} - \frac{1}{n}\right) \cdot \frac{0.16^{\frac{r(r+2T+3)}{2}}}{(32\sqrt{2})^{r}(r+T+1)^{\frac{r(r+3)}{4}}n^{\frac{3r(r+T+2)}{2}}}.
    \end{align*}
By Proposition~\ref{prop:operator-norm-to-entry-magnitude},
\begin{align*}
        \Pr\left(\max_{i,j} \left|\UU[i,j]\right| \ge \sqrt{\frac{\left(\frac{3}{2} - \frac{2}{T}\right)^r \max\{N_d,1\}}{2(T+1)}} \,\Bigg \vert\, G_r\right) \ge 0.47.
    \end{align*}
    Therefore, we obtain the following guarantee on the numerator of the growth factor:
    \begin{align*}
        &\Pr\left(\max_{i,j} \left|\UU[i,j]\right| \ge \sqrt{\frac{\left(\frac{3}{2} - \frac{2}{T}\right)^r \max\{N_d,1\}}{2(T+1)}}\right)\\
        &\ge \Pr\left(\max_{i,j} \left|\UU[i,j]\right| \ge \sqrt{\frac{\left(\frac{3}{2} - \frac{2}{T}\right)^r \max\{N_d,1\}}{2(T+1)}} \,\Bigg\vert\,G_r\right)\Pr\left(G_r\right)\\
        &\ge 0.235 \left(1 - \frac{2}{n}\right) \cdot \frac{0.16^{\frac{r(r+2T+3)}{2}}}{(32\sqrt{2})^{r}(r+T+1)^{\frac{r(r+3)}{4}}n^{\frac{3r(r+T+2)}{2}}}. \numberthis \label{ineq:growth-numerator}
    \end{align*}
    
    On the other hand, we may upper bound the denominator of the growth factor using Lemma~\ref{lem:gaussian-max-upper-tail}:
    \begin{align*}
        \Pr\left(\max_{i,j}|\AA[i,j]| \ge s\right) \le \frac{n^2 \sqrt{2}}{s \sqrt{\pi}}\exp\left(-\frac{s^2}{2}\right), \quad\forall\, s > 0.
    \end{align*}
    Setting $s = 3r\sqrt{\log n}$, we get
    \begin{align*}
        \Pr\left(\max_{i,j}|\AA[i,j]| \ge 3r\sqrt{\log n}\right) &\le \frac{n^2 \sqrt{2}}{3r\sqrt{\log n} \sqrt{\pi}}\exp\left(-\frac{9r^2 \log(n)}{2}\right)\\
        &\le \frac{\sqrt{2}}{3r\sqrt{\pi \log n}} \cdot \frac{1}{n^{\frac{9}{2}r^2 - 2}}. \numberthis \label{ineq:growth-denominator}
    \end{align*}

    Combining the bounds \eqref{ineq:growth-numerator} and \eqref{ineq:growth-denominator}, if we set $T = 20$, we get
    \begin{align*}
        &\Pr\left(\rho(\AA) \ge \frac{\sqrt{\frac{(7/5)^r \max\{N_{r + 20},1\}}{42}}}{3r\sqrt{\log n}}\right)\\
        &\ge \Pr\left(\max_{i,j} \left|\UU[i,j]\right| \ge \sqrt{\frac{\left(\frac{7}{5}\right)^r \max\{N_{r + 20},1\}}{42}}\right)- \Pr\left(\max_{i,j}|\AA[i,j]| \ge 3r\sqrt{\log n}\right)\\
        &\ge 0.235\left(1 - \frac{2}{n}\right) \cdot \frac{0.16^{\frac{r(r+43)}{2}}}{(32\sqrt{2})^{r}(r+21)^{\frac{r(r+3)}{4}}n^{\frac{3r(r+22)}{2}}} - \frac{\sqrt{2}}{3r\sqrt{\pi \log n}} \cdot \frac{1}{n^{\frac{9}{2}r^2 - 2}}\\
        &= \exp\Bigg(\frac{r(r+43)}{2}\log(0.16) - r\log(32\sqrt{2}) - \frac{r(r+3)}{4}\log(r+21) - \frac{3r(r+22)}{2}\log(n)\\
        &\hspace{0.3in}+ \log\left(0.235\left(1 - \frac{2}{n}\right)\right)\Bigg) - \exp\left(\log\left(\frac{\sqrt{2}}{3\sqrt{\pi}}\right) - \left(\frac{9}{2}r^2 - 2\right)\log(n) -\log(r) -\frac{1}{2}\log\log(n)\right)\\
        &\ge \exp\left(-\frac{r(7r + 135)}{4}\log(n) -0.917r^2 - 43.213r -1.4482 +\log\left(1- \frac{2}{n}\right)\right) -\exp\left(-\left(\frac{9}{2} r^2-2 \right)\log(n)\right)\\
        &\ge \exp\left(-2.369r^2\log(n)\right) - \exp\left(-4.499r^2\log(n)\right)\\
        &\ge \exp\left(-2.37r^2\log(n)\right),
    \end{align*}
    which holds for all $r \ge 100$ and $n \ge r + 22$.

    Now define the function \[f(r) \colonequals \frac{\sqrt{\frac{(7/5)^r \max\{N_{r + 20},1\}}{42}}}{3r\sqrt{\log n}} = \frac{\sqrt{\left(\frac{7}{5}\right)^r} \max\left\{\frac{\sqrt{n}}{\sqrt{e^{1 + \frac{1}{r+21}} (r+21 + 2\log(n))}} , 1\right\}}{3\sqrt{42} \cdot r \sqrt{\log(n)}},\] which is increasing for $r \ge 10$. Let $\rho_{\min} \colonequals f(100)$, and $\rho_{\max} \colonequals f(n-22)$. Now, for any $\rho \in [\rho_{\min}, \rho_{\max}]$, define \[r_\rho \colonequals \min \{r \in \bN: f(r) \ge \rho, r\ge 10\}.\] Then, we have
    \begin{align*}
        \Pr\left(\rho(\AA) \ge \rho\right) &\ge \Pr\left(\rho(\AA) \ge f(r_{\rho})\right) \ge \exp\left(-2.37r_{\rho}^2\log(n)\right).
    \end{align*}
    Finally, assume that $n \ge 122$, then $100\sqrt{n}\log(n) > f(100) = \rho_{\min}$, and $\frac{\left(\frac{7}{5}\right)^{\frac{n-22}{2}}}{3\sqrt{42} (n-22)\sqrt{\log(n)}} \le f(n-22) = \rho_{\max}$. For any $\rho \in \left[100\sqrt{n}\log(n), \frac{\left(\frac{7}{5}\right)^{\frac{n-22}{2}}}{3\sqrt{42} (n-22)\sqrt{\log(n)}}\right]$, we have
    \begin{align*}
        f(r_\rho - 1) \le \rho \le f(r_{\rho}),
    \end{align*}
    where $101 \le r_{\rho} \le n-22$. Taking the logarithm and using that for $n \ge 122$ and $101 \le r_{\rho} \le n-22$,
    \[e^{1 + \frac{1}{r_{\rho}+20}} (r_{\rho}+20 + 2\log(n)) \le 0.749(r_{\rho} - 1)\log(n),\]
    we get
    \begin{align*}
        \log(\rho) &\ge \log f(r_{\rho} - 1)\\
        &\ge \frac{r_{\rho} - 1}{2}\log(7/5) + \frac{1}{2}\log(n) -\frac{1}{2}\log(0.749) - \frac{1}{2}\log(r_{\rho}-1) - \frac{1}{2}\log\log(n)\\
        &\hspace{0.4in}- \frac{1}{2}\log(42) - \log(3(r_{\rho}-1)) - \frac{1}{2}\log\log n\\
        &\ge 0.168(r_{\rho} - 1) + \frac{1}{2}\log(n) - \frac{3}{2}\log(r_{\rho} - 1) -\log\log(n) + 0.144   -  1.869 - 1.098.
    \end{align*}
    Rearranging the inequality and using that $\log(\rho) \ge \log(100) + \frac{1}{2}\log(n) + \log\log(n)$, we get
    \begin{align*}
        \log\left(\frac{\rho}{\sqrt{n}}\right) &\ge 0.084(r_{\rho} - 1) - \frac{3}{4}\log(r_{\rho} - 1) + 0.891\\
        &\ge 0.05779 r_{\rho},
    \end{align*}
    which holds for $r_{\rho} \ge 101$.

    Thus, we conclude that for $n \ge 122$, and any $$\rho \in \left[100\sqrt{n}\log(n),\; \frac{\left(\frac{7}{5}\right)^{\frac{n-22}{2}}}{3\sqrt{42} (n-22)\sqrt{\log(n)}}\right],$$
    we have
    \begin{align*}
        \Pr\left(\rho(\AA) \ge \rho\right) &\ge \exp\left(-2.37r_{\rho}^2\log(n)\right)\\
        &\ge \exp\left(-710 \log^2\left(\frac{\rho }{\sqrt{n}}\right)\log(n)\right).
    \end{align*}

\bibliographystyle{alpha}
\bibliography{main}

\end{document}